\definecolor{webgreen}{rgb}{0,.5,0}
\definecolor{webbrown}{rgb}{.8,0,0}
\definecolor{emphcolor}{rgb}{0.95,0.95,0.95}
\theoremstyle{plain}
\newtheorem{theorem}{Theorem}[section]
\newtheorem{prop}[theorem]{Proposition}
\newtheorem{lem}[theorem]{Lemma}
\newtheorem{cor}[theorem]{Corollary}
\newtheorem{rem}[theorem]{Remark}
\newtheorem{defn}[theorem]{Definition}
\theoremstyle{definition}
\newtheorem{example}[theorem]{Example}
\theoremstyle{remark}
\newtheorem{remark}[theorem]{Remark}
\newcommand{\N}{\ensuremath{\mathbb{N}}}
\newcommand{\R}{\ensuremath{\mathbb{R}}}
\renewcommand{\P}{\ensuremath{\mathbb{P}}}
\newcommand{\E}{\ensuremath{\mathbb{E}}}
\newcommand{\be}{\begin{equation}}
\newcommand{\ee}{\end{equation}}
\begin{document}

\begin{frontmatter}

\title{The ancestral selection graph for a $\Lambda$-asymmetric Moran model}

\author[AG]{Adri\'an Gonz\'alez Casanova}
\ead{adrian.gonzalez@im.unam.mx,gonzalez.casanova@berkeley.edu}

\author[NK]{Noemi Kurt}
\ead{kurt@math.uni-frankfurt.de}
%\cortext{Corresponding author}

\author[JLP]{Jos\'e Luis P\'erez}
\ead{jluis.garmendia@cimat.mx}

\address[AG]{Instituto de Matematicas, Universidad Nacional Autonoma de Mexico (UNAM), Cuernavaca, M\'exico and Department of Statistics, University of California at Berkeley}

\address[NK]{Institut f\"ur Mathematik, Johann Wolfgang Goethe-Universit\"at,  60325 Frankfurt am Main,  Germany}

\address[JLP]{Department of Probability and Statistics, Centro de Investigaci\'on en Matem\'aticas A.C. Calle Jalisco s/n. C.P. 36240, Guanajuato, Mexico}

\date{\today}

\begin{abstract}
Motivated by the question of the impact of selective advantage in populations with skewed reproduction mechanims, we study a Moran model with selection. We assume that there are two types of individuals, where the reproductive success of one type is larger than the other. The higher reproductive success may stem from either more frequent reproduction, or from larger numbers of offspring, and is encoded in a measure $\Lambda$ for each of the two types.  $\Lambda$-reproduction here means that a whole fraction of the population is replaced at a reproductive event. Our approach consists of constructing a $\Lambda$-asymmetric Moran model in which individuals of the two populations compete, rather than considering a Moran model  for each population.  Provided the measure are ordered stochastically, we can couple them. This allows us to construct the central object of this paper,  the $\Lambda-$asymmetric ancestral selection graph,  leading to a pathwise duality of the forward in time $\Lambda$-asymmetric Moran model with its ancestral process. We apply the ancestral selection graph in order to obtain scaling limits of the forward and backward processes, and note that the frequency process converges to the solution of an SDE with discontinous paths. Finally, we derive a Griffiths representation for the generator of the SDE and use it to find a semi-explicit formula for the probability of fixation of the less beneficial of the two types.
\end{abstract}

\begin{keyword}
Moran model,  ancestral selection graph, duality, $\Lambda-$coalescent, fixation probability
\MSC[2020] 92D15, 60J28, 60J90
\end{keyword}

\end{frontmatter}

\section{Introduction}
There is a deep connection between the ancestry of a population and the dynamics of its genetic configuration. Mathematical population genetics exploits and formalises this connection, see for example \cite{Etheridge-Notes} for an overview. Its most classical instance is the moment duality relation between the block counting process of the Kingman coalescent and the Wright Fisher diffusion, which relates the past and future of a population that evolves on absence of selection and in which the number of offsprings of a mother is of a smaller order of magnitude than the population size. 

This connection between the past and the future is ubiquitous and occurs in many biologically inspired mathematical models. For example,  the ancestry of populations with skewed offspring distributions have been modelled by $\Lambda$-coalescents or multiple merger coalescents \cite{Pitman, Sagitov} different from the Kingman coalescent, but which still have a moment dual, namely the $\Lambda$-Fleming Viot process (see \cite{BB,D-K1, D-K2}).  Skewed offspring distributions occur in populations where the number of offspring of one mother can be of the order of magnitude of the population size.  It is believed that they may be relevant for certain marine species such as the Atlantic cod, where one mother may lay millions of eggs. See e.g. \cite{AKHE, EW} and references therein for a more detailed discussion.

In the presence of weak selection, for example when one type of individuals reproduces slightly faster than the others, it is cumbersome to formally connect the ancestry and the forward in time changes in a population configuration.  Remarkably, there is a notion of potential ancestry that overcomes this difficulty for populations in the universality class of the Kingman coalescent: The celebrated ancestral selection graph (ASG) of Krone and Neuhauser \cite{KN, NK}. Heuristically, in a population with only two types, Krone and Neuhauser  were able to describe the number of potential ancestors of a sample as a branching coalescing process. Under the rule that an individual is of selective type if at least one of its ancestors has selective advantage, the forward in time propagation of types can be specified in terms of the potential ancestry and the types at time zero. As a consequence, the frequency of individuals with selective disadvantage is a Markov process, which is in moment duality to the process of the number of potential ancestors (backward in time). The graphical construction of the ancestral selection graph provides a pathwise duality relation of the forward and backward processes, which for example links fixation probabilities with the ancestral process, see e.g. \cite{PP, KB}.

 However, the classical ancestral selection graph couldn't capture genetic dynamics of a population of, say, cod in which  a subpopulation is capable of reproducing faster, as this leads to selective events individually affecting large portions of the population.  What is then a good model for populations with skewed offspring distributions in the presence of selection?  This was one of the main motivating questions of work by Griffiths, Etheridge and Taylor \cite{EGT}, see also Etheridge and Griffiths \cite{EG}.  These authors showed that the process of potential ancestors in this case should be a nonlinear branching and coalescing process. They equipped their model with parent independent mutation and found a duality in terms of the stationary distribution of the forward process, in the case that the stationary distribution exists. They also spelled out the reason for this mysterious nonlinear branching.
 
Furthermore, it was theorised by Gillespie \cite{Gill73,Gill74, Gill75} that subpopulations with different reproduction mechanisms competing in a sequential sampling experiment (for example, in the Lenski experiment \cite{Lenski, GKWY}) lead to different macroscopic behaviours, which scale beyond the Wright-Fisher diffusion universality class. A detailed mathematical analysis of these \textit{asymmetric} models was carried out in \cite{CGP}. What does it mean that one subpopulation has selective advantage over another if they have very different reproduction behaviour? Can one compare the strategy of the cod with the strategy of the rabbit?

 The goal of the present paper is to revisit the motivating questions of \cite{EGT} and \cite{Gill73, Gill74, Gill75} to give a graphical representation of a Moran model with $\Lambda$-reproduction in the presence of selection and asymmetry, following the structure of the ancestral selection graph. Our representation provides a pathwise duality between what we will call below the $\Lambda$-asymmetric Moran model and its ancestral line counting process, from which the generator duality can be derived.  Contrary to Etheridge, Griffiths and Taylor we don't need to start from the invariant distribution in our construction and therefore don't need to include mutations in the model.  Moreover, and probably most importantly,  our construction works without previously assigning types to the individuals,  since we can construct the ancestral selection graph independently, in the spirit of Neuhauser and Krone.

 Our construction is based on a coupling of the measures governing the reproduction mechanisms of two subpopulations,  which also has an interpretation in terms of optimal transport theory. However, not every pair of measures can be used to construct a $\Lambda$-ancestral selection graph, and we provide explicit sufficient conditions. This is done by %introducing the \textit{partial order of adaptation} and
 	 showing that a pair of probability measures that is comparable with respect to the stochastic partial order can be coupled to construct an ancestral selection graph.  This can be generalised to finite measures ordered in a similar manner.

Finally, we derive a representation inspired by Griffiths \cite{Griff} for the processes that arise as scaling limits of  our $\Lambda$-Moran model with selection. As an application we compute a recursion for the fixation probabilities.

The paper is organised as follows. In the next section, we will define the $\Lambda$-asymmetric Moran model and provide the generator of its frequency process. In Section \ref{sect:adaptation-selection} we will state the central coupling lemma and construct the ancestral selection graph.  Using the construction of the ancestral graph, we will consider the ancestral process and show our duality result in Section \ref{sect:ancestral_process}. A generalization of the $\Lambda$-asymmetric Moran to the case of finite measures is provided in Section \ref{sect:generalisations}, as well as some remarks on the coupling. Finally, we will discuss scaling limits in Section \ref{scaling_limits}, and fixation probabilites via Griffith's representation in Section \ref{sect:Griffiths}.

%\section{To be removed after we decide, this is a test for how things look}
%Suggestions for the typographical problem with plus and minus: Call the types $\oplus$ and $\ominus$. But keep the normal + and - in the measures, that is, write $\Lambda^{\oplus},\Lambda^{\ominus}$ and not $\Lambda^{\oplus}$ and $\lambda^{\ominus}.$ Alternative notation: $\boxplus$ and $\boxminus$ in the text.  Also we have + and - vs. $+$ and $-$, but the math versions in the text look like a hyphen. The packages MnSymbol and mathabx provide slightly different looking versions of the circle and box version, my personal favourite is MnSymbol (because it's smaller than the others, mathabx has the advantage that the symbols don't quite go up to the boundary of the box or circle, but that's hard to see anyway).

\section{$\Lambda-$asymmetric Moran model}\label{sect:Moran}
In this section, we define our main object of interest, the $\Lambda$-asymmetric Moran model. It is related to the Moran model with viability selection of Etheridge, Griffiths and Taylor \cite{EGT}, where for simplicity we restrict ourselves to the case of two types and no mutation.  More importantly, we also require our process to be consistent and exchangeable.

We consider a continuous time Moran  model with fixed population size $N.$ The two types will be denoted by $\oplus$ and $\ominus$,  we write $\tau(i,t)\in\{\oplus,\ominus\}$ for the type of individual $i\in[N]=\{1,...,N\}$ at time $t\geq 0.$ Denote by $\mathcal{M}[0,1]$ the set of finite measures and by $\mathcal{M}_1[0,1]$ the set of probability measures on $[0,1]$ equipped with the Borel sigma field.  Fix probability measures $\Lambda^{\oplus}, \Lambda^{\ominus}\in \mathcal{M}_1[0,1].$ These measures will provide the reproduction rates and the strength of the selection at a reproductive event, meaning that a fraction of the population, whose size is determined by $\Lambda^{\oplus}$ or $\Lambda^{\ominus}$, will be replaced by offspring of the reproducing individual.  Note that the restriction to probability measures is uniquely for the simplicity of the presentation. We will explain in Subsection \ref{sect:adaptation-selection} below how the construction of the model easily generalises to finite measures $\Lambda^{\oplus},\Lambda^{\ominus}$.  We stress that not only the construction of the model but all results presented below, in particular the construction of the ancestral selection graph, also hold for finite measures, with no changes to the proofs.

\begin{defn}[$\Lambda$-asymmetric Moran model]\label{def:Lambda-Moran} At rate 1, an individual chosen uniformly among the current population of size $N$ reproduces.  If at time $t$ individual $i$ reproduces and has type $\tau(i,t)\in\{\oplus,\ominus\}$, then the selective strength of the reproductive event is provided as a random variable $Y$ sampled independently of everything else from the probability measure $\Lambda^{\tau(i,t)}$.  Conditional on $Y,$ each of the $N-1$ individuals in $[N]\setminus \{i\}$ participates in the reproduction event with probability $Y,$ meaning that the participating individuals die and are replaced by offspring of the reproducing individuals, carrying the type of individual $i.$ 
\end{defn}

\begin{figure}[t]
       \centering
    \scalebox{1}{
        \begin{tikzpicture}[darkstyle/.style={circle,inner sep=0pt}]
        
        %Lines
        \draw[-,thick](0,0)--(6,0);
        \draw[-,thick](0,1)--(6,1);
        \draw[-,thick](0,2)--(6,2);
        \draw[-,thick](0,3)--(6,3);
        \draw[-,thick](0,4)--(6,4);
        \draw[-,thick](0,5)--(6,5);

       %Dots/Individuals
        \node (A1) at (1,0) [circle,scale=0.3]{} ;
        \node (A2) at (1,1) [circle,scale=0.3]{} ;
        \node (A3) at (1,2) [circle, scale=0.5, draw, fill=black]{} ;
        \node (A4) at (1,3) [circle, scale=0.3]{} ;
        \node (A5) at (1,4) [circle, scale=0.3]{} ;
        \node (A6) at (1,5) [circle,scale=0.3]{} ;
        
         \node (B1) at (3.5,0) [circle,scale=0.3]{} ;
        \node (B2) at (3.5,1) [circle,scale=0.3]{} ;
        \node (B3) at (3.5,2) [circle,scale=0.3]{} ;
        \node (B4) at (3.5,3) [circle,scale=0.5, draw, fill=black]{} ;
        \node (B5) at (3.5,4) [circle,scale=0.3]{} ;
        \node (B6) at (3.5,5) [circle,scale=0.3]{} ;
        
%        \node (C1) at (5,0) [circle,scale=0.75, draw, fill=black]{} ;
%        \node (C2) at (5,1) [circle,scale=0.75, draw, fill=black]{} ;
%        \node (C3) at (5,2) [circle,scale=0.75, draw, fill=black]{} ;
%        \node (C4) at (5,3) [circle,scale=0.75, draw, fill=black]{} ;
%        \node (C5) at (5,4) [circle,scale=0.75, draw, fill=black]{} ;
%        \node (C6) at (5,5) [circle,scale=0.75, draw, fill=black]{} ;
%        
        
        %Arrows
         \draw[->, very thick] (A3) to[bend left=75] (A6);
        \draw[->, very thick] (A3) to[bend left=45] (A5);
         \draw[->, very thick] (A3) to[bend right=45] (A2);
          \draw[->, very thick] (A3) to[bend right=60] (A1);
         
         \draw[->, very thick] (B4) to[bend right=75] (B1);
         \draw[->, very thick] (B4) to[bend right=45] (B3);
         %\draw[->, ultra thick] (B4) to[bend left=45] (B5);
        % \draw[->, ultra thick, color=gray!70!white] (B4) to[bend right=45] (B2);
         
         % \draw[->, ultra thick, color=gray!70!white] (C1) to (C2);
          %\draw[->, ultra thick, color=gray!70!white] (C1) to[bend left=45](C5);
          
          %Types
          \node () at (-0.2,0){$\bm{\ominus}$};
          \node () at (-0.2,1){$\bm{\ominus}$};
          \node () at (-0.2,2){$\bm{\oplus}$};
          \node () at (-0.2,3){$\bm{\ominus}$};
          \node () at (-0.2,4){$\bm{\ominus}$};
          \node () at (-0.2,5){$\bm{\ominus}$};
          
           \node () at (6.2,0){$\bm{\ominus}$};
          \node () at (6.2,1){$\bm{\oplus}$};
          \node () at (6.2,2){$\bm{\ominus}$};
          \node () at (6.2,3){$\bm{\ominus}$};
          \node () at (6.2,4){$\bm{\oplus}$};
          \node () at (6.2,5){$\bm{\oplus}$};
        \end{tikzpicture}
        }   
      \caption{A realisation of the $\Lambda$-asymmetric frequency process. Arrows point from the reproducing individual to their offspring.  In the first reproductive event, a type $\oplus$ reproduces, in the second one a type $\ominus$ individual. The role of selection in this construction will be discussed later in Section \ref{sect:adaptation-selection}, cf.  also Figure \ref{fig:frequency_Lambda}.}
      \label{fig:frequency}
\end{figure}
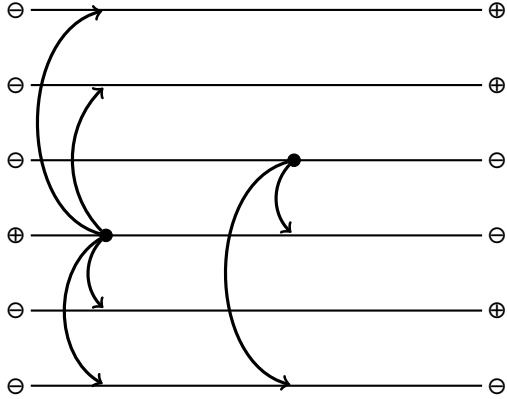

Our model is closely related to the \emph{Moran model with viability selection} introduced in \cite{EGT}. There, individuals reproduce at fixed rate $\lambda$ and  produce a number of offspring, among which only some of the children survive to maturity.  The probability of an individual of type $\tau$ to have $b$ mature offspring is given as $p_{\tau b}=\sum_{a=b}^{N-1} r_av_{\tau ab},$ with $r_a$ the number of children, and $v_{\tau ab}$ the probability that $b$ out of $a$ children survive to maturity. In particular $v_{\tau ab}=\int_{[0,1]}\binom{a}{b}p^b(1-p)^{a-b}\nu_\tau(dp)$ for some probability measure $\nu_\tau$ may be considered.  This corresponds to our model if we set $\Lambda^\tau=\lambda\nu_\tau$ and $r_{N-1}=1.$ Thus the model of \cite{EGT} is a priori more general, however, their methods require the process to be in stationarity, and thus necessarily needs to incorporate mutations.  Our construction doesn't need any stationarity assumption and works for any initial conditions.  Further, our construction always leads to a consistent and exchangeable process. In the model of \cite{EGT} this is not the case if one takes for example $r_a=\delta_k$ for some fixed $k\in\N$ for all $a$, which would violate consistency.  In view of \cite{MS}, see also \cite{JKR}, we expect our model to cover all consistent and exchangeable Moran models with coordinated selection in terms of two finite $\Lambda$-measures.  Exchangeability plays a crucial role in the construction of the ancestral selection graph for our model below.  In our notation the $\Lambda-$asymmetric Moran model doesn't directly lead to an interpretation in terms of viability selection. On the other hand,  we can at least implicitly incorporate various selective mechanisms in our finite measures $\Lambda^\tau.$ See also Remark  \ref{rem:cases_of_reproduction} below for an interpretation of the $\Lambda$-measures in some special cases.

The ancestral process of the $\Lambda$-asymmetric Moran model that will be derived in Section \ref{sect:ancestral_process} is similar to the one studied in \cite{BBDP}, where the probability of fixation for a related $\Lambda-$Moran model involving Beta measures is calculated under stationarity assumptions.  A notable difference is the fact that the ancestral process of \cite{BBDP} has linear branching rates,which is not in general the case for our model and for the model of \cite{EGT}. A similar process also appears for Moran models in a one-sided random environment, \cite{CV, GSW}.  In \cite{BEH} an interesting construction of the ancestral selection graph for a Moran model with frequency dependent selection is provided, leading to a duality with respect to the same function as for our model, see Corollary \ref{cor:duality}.  Moran models with other types of $\Lambda$-reproduction undergoing selection but with a different role of the $\Lambda$ measure have also been investigated by Schweinsberg  \cite{Schw1, Schw2} and studied further by Bah and Pardoux and Ged \cite{BP, Ged}.

We denote by
\[X^N_t=\frac{1}{N}\sum_{i=1}^N 1_{\{\tau((t,i))=\ominus\}},\qquad t\ge0,\]
the frequency at time $t$ of individuals of type $\ominus$, which will later be the less fit type.  Clearly the process $X^N=(X_t^N)_{t\geq 0}$  is a piecewise continuous Markov chain with state space $S_N:=\{0, 1/N,...,(N-1)/N, 1\}$. Its transitions are given by
\[
x\mapsto \begin{cases} x+\frac{k}{N}&  \text{ at rate } x\int_0^1 \binom{(1-x)N}{k} y^k (1-y)^{(1-x)N-k}\Lambda^{\ominus}(dy), \quad k=1,...,(1-x)N \\
x-\frac{k}{N}& \text{ at rate } (1-x) \int_0^1 \binom{xN}{k}y^k(1-y)^{xN-k}\Lambda^{\oplus}(dy),\quad k=1,...,xN .
\end{cases}\]

Observe that atoms at 0 of $\Lambda^\oplus$ and $\Lambda^\ominus$ don't influence these rates. In words, upon a reproduction event of a type $\ominus$ individual, which happens at rate $x$, the strength $y$ of the reproduction is determined according to the measure $\Lambda^{\ominus}$. Then, independently with probability $y,$ each of the $N-1$ non-reproducing individuals dies and is replaced by an offspring of type $\ominus$.  With probability $1-y$ it remains. Only a replacement of a type $\oplus$ individual by an offspring of type $\ominus$ leads to an increase in type $\ominus$ individuals, thus there are $(1-x)N$ individuals that may swap type from $\oplus$ to $\ominus$ at such an event. The number of individuals switching type from $\oplus$ to $\ominus$ is thus binomial with success probability $y$ and $N(1-x)$ trials.  A reproduction of a type $\oplus$ individuals works vice versa.  See Figure \ref{fig:frequency} for an example.

Hence,  the generator of the frequency process of the $\Lambda$-asymmetric Moran model acts on bounded measurable functions $f:[0,1]\to[0,1]$ as 
\begin{align}\label{gen_frequency}
	\mathcal{B}^Nf(x)
	&= x\E\left[f\left(x+\frac{1}{N}\text{Binom}(N(1-x),Y^{\ominus})\right)-f(x)\right]\notag\\
	&+(1-x)\E\left[f\left(x-\frac{1}{N}\text{Binom}(Nx,Y^{\oplus})\right)-f(x)\right].
\end{align}

Here, $x\in S_N$, %[N_0]:=\{0, 1/N,...,(N-1)/N, 1\},$ 
and the expectation is taken with respect to the random variables $Y^{\ominus}$ resp. $Y^{\oplus}$, which are distributed according to the probability measures $\Lambda^{\ominus}$ resp. $\Lambda^{\oplus}$.

\section{Stochastic ordering and the $\Lambda$-asymmetric ancestral selection graph}\label{sect:adaptation-selection}

The next aim is to construct an ancestral selection graph corresponding to the $\Lambda$-asymmetric Moran model defined in the previous section.  We will give a graphical representation in terms of Poisson Point processes which forward in time provides a construction of the $\Lambda$-asymmetric Moran model, and backward in time gives the ancestry of a sample of such a population. Our construction differs from the graphical representation of Etheridge, Griffiths and Taylor in several ways. In particular,  the construction works for any assignment of types to the individuals, and it doesn't need an invariant distribution. It thus provides a pathwise construction of the duality found in \cite{EGT}, see Corollary \ref{cor:duality} below.

\medskip

As a trade-off,  we need the measures $\Lambda^{\ominus}$ and $\Lambda^{\oplus}$ to be stochastically ordered. This assumption will allow us to couple the two measures, a fact that will play a crucial role in the construction.  This order is readily interpreted in terms of the selective advantage of the reproduction mechanisms: An individual of type $\oplus$ tends to have more offspring at a selective event than an individual of type $\ominus$. See also the discussion in Section \ref{sect:generalisations} below. Moreover, it can easily be generalised to finite measures.

We recall that two probability measures $\mu_0$ and $\mu_1$ on $[0,1]$ are \emph{stochastically ordered,} we write $\mu_0\leq\mu_1$, if for every bounded increasing function $f:[0,1]\to \R$ we have
\[\int_{[0,1]}f(x)\mu_0(dx)\leq \int_{[0,1]}f(x)\mu_1(dx).\]

This provides a partial order on the set of probability measures on $[0,1]$. Note that by the usual measure theoretic approximation in terms of step functions, this is equivalent to requiring $\mu_0[x,1]\leq \mu_1[x,1]$ for every $x\in[0,1].$

The crucial step in our construction is the following coupling lemma. For stochastically ordered measures $\Lambda^{\ominus}$ and $\Lambda^{\oplus}$, in the construction of the $\Lambda-$asymmetric Moran model we can equivalently consider a Moran model constructed from just one measure $\Lambda$ that contains the information of a particular coupling of the pair $(\Lambda^{\ominus}, \Lambda^{\oplus}).$ 

\begin{lem}[Coupling lemma]\label{descom_measure_new} %\label{descom_measure} 
	Let $\Delta=\{(y,z)\in[0,1]^2:y+z\in[0,1]\}$ and consider two probability measures $\Lambda^{\oplus},\Lambda^{\ominus}$ on $[0,1]$ such that $\Lambda^{\ominus}\leq\Lambda^{\oplus}$ in the stochastic partial order.  Then there exists a finite measure  $\Lambda$ on $\Delta$ such that for all $A,B \in \mathcal{B}([0,1]),$
\[ \Lambda^{\ominus}(A)=\Lambda(\{(y,z): y\in A\}) \quad  \text { and } \quad \Lambda^{\oplus}(B)=\Lambda(\{(y,z): y+z\in B\})\]
hold. 
\end{lem}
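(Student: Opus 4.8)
The plan is to recognise this as an instance of the classical monotone (quantile) coupling of two stochastically ordered distributions, i.e.\ a version of Strassen's theorem specialised to measures on $[0,1]$, and then to read off $\Lambda$ as the joint law of the coupled pair after the change of variables $(Y^\ominus,Y^\oplus)\mapsto(Y^\ominus,\,Y^\oplus-Y^\ominus)$. The extra coordinate $z$ is exactly the almost-sure surplus $Y^\oplus-Y^\ominus\ge 0$ produced by the order, which forces the pair to live in $\Delta$.

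Concretely, first I would pass from the measures to their distribution functions. Writing $F_\ominus(t)=\Lambda^\ominus[0,t]$ and $F_\oplus(t)=\Lambda^\oplus[0,t]$, the characterisation of the stochastic order recalled just above the lemma, namely $\Lambda^\ominus[x,1]\le\Lambda^\oplus[x,1]$ for all $x$, translates into $F_\ominus(t)\ge F_\oplus(t)$ for every $t\in[0,1]$ (using right-continuity of the distribution functions to remove the one-sided limits). I would then introduce the generalised inverses (quantile functions) $F_\bullet^{-1}(u)=\inf\{t\in[0,1]:F_\bullet(t)\ge u\}$ for $u\in(0,1)$, and verify the elementary but crucial implication that the pointwise domination of distribution functions reverses for the inverses: since $F_\ominus\ge F_\oplus$, the sublevel set $\{t:F_\oplus(t)\ge u\}$ is contained in $\{t:F_\ominus(t)\ge u\}$, whence $F_\ominus^{-1}(u)\le F_\oplus^{-1}(u)$ for all $u$.

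With this in hand I would let $U$ be uniform on $(0,1)$ and set $Y^\ominus:=F_\ominus^{-1}(U)$ and $Y^\oplus:=F_\oplus^{-1}(U)$. The standard fact that $F_\bullet^{-1}(U)$ has law $\Lambda^\bullet$ gives the correct marginals, and the inverse ordering above gives $0\le Y^\ominus\le Y^\oplus\le 1$ almost surely. Setting $Z:=Y^\oplus-Y^\ominus\ge 0$, the pair $(Y^\ominus,Z)$ satisfies $Y^\ominus\ge 0$, $Z\ge 0$ and $Y^\ominus+Z=Y^\oplus\le 1$, hence takes values in $\Delta$. I would define $\Lambda$ to be the law of $(Y^\ominus,Z)$, which is a probability measure and in particular finite. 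The two required identities are then immediate: $\Lambda(\{(y,z):y\in A\})=\P(Y^\ominus\in A)=\Lambda^\ominus(A)$ and $\Lambda(\{(y,z):y+z\in B\})=\P(Y^\oplus\in B)=\Lambda^\oplus(B)$. Equivalently, and phrased without probabilistic language, $\Lambda$ is the pushforward of Lebesgue measure on $(0,1)$ under the map $u\mapsto\bigl(F_\ominus^{-1}(u),\,F_\oplus^{-1}(u)-F_\ominus^{-1}(u)\bigr)$.

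The construction itself is routine; the only real content, and the step I would treat most carefully, is the inversion of the stochastic order into the almost-sure pointwise inequality $Y^\ominus\le Y^\oplus$. This hinges on choosing the correct generalised inverse and on handling the atoms and one-sided limits of the two measures, both in the equivalence between $\Lambda^\ominus[x,1]\le\Lambda^\oplus[x,1]$ and $F_\ominus\ge F_\oplus$, and in the quantile-ordering argument near the endpoints $u\to 0,1$. Everything else, namely the marginal computation and the verification that $(Y^\ominus,Z)\in\Delta$, then follows directly from $0\le Y^\ominus\le Y^\oplus\le 1$.
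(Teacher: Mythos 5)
Your proof is correct and takes essentially the same route as the paper: both construct $\Lambda$ as the law of $\bigl(F_\ominus^{-1}(U),\,F_\oplus^{-1}(U)-F_\ominus^{-1}(U)\bigr)$ for $U$ uniform, i.e.\ the monotone quantile coupling pushed into $\Delta$. The only difference is that the paper invokes a version of Strassen's theorem (via Lindvall) to obtain the a.s.\ ordered coupling, whereas you verify the quantile ordering $F_\ominus^{-1}\le F_\oplus^{-1}$ directly from $F_\ominus\ge F_\oplus$, which is a self-contained inlining of the same standard fact.
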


\begin{proof}
By stochastic ordering we know that $\Lambda^{\ominus}[0,x]-\Lambda^{\oplus}[0,x]\geq0$ for any $x\in[0,1]$. Therefore by a version of Strassen's Theorem (see (3) in \cite{Lindvall1999}) there exists two random variables $Z^{\ominus}$ and $Z^{\oplus}$ on a probability space such that $Z^{\ominus}\sim \Lambda^{\ominus}(\cdot)$ and $Z^{\oplus}\sim \Lambda^{\oplus}(\cdot),$
and $Z^{\ominus}\leq Z^{\oplus}$ a.s.  Here $\sim$ means that the random variable is distributed according to the respective probability measure. Moreover, the joint distribution of $(Z^{\ominus},Z^{\oplus})$, is given as 
\begin{align*}
	\mathbb{P}\left((Z^{\ominus},Z^{\oplus})\in A\right)=\mathbb{P}\left((F_{\ominus}^{-1}(U),F_{\oplus}^{-1}(U))\in A\right),\qquad A\in\mathcal{B}([0,1]^2),
\end{align*}
where $F_{\ominus}$ (resp. $F_{\oplus}$) is the distribution function of the random variable $Z^{\ominus}$ (resp. $Z^{\oplus}$) and $U$ is a unifom random variable on $[0,1]$.
Hence,  by setting
\begin{align*}
	\Lambda(A):=\mathbb{P}\left((Z^{\ominus},(Z^{\oplus}-Z^{\ominus}))\in A\right),\qquad \text{for any $A\in\mathcal{B}(\Delta)$,}
\end{align*}
we note that for any $B\in \mathcal{B}([0,1])$ we have
$	\Lambda(B\times[0,1])=\mathbb{P}\left((Z^{\ominus},Z^{\oplus}-Z^{\ominus})\in B\times[0,1]\right)=\mathbb{P}(Z^{\ominus}\in B)=\Lambda^{\ominus}(B)$
and
$\Lambda\left(\{(y,z):y+z\in A\}\right)=\mathbb{P}\left(Z^{\ominus}+Z^{\oplus}-Z^{\ominus}\in A\right)=\mathbb{P}\left(Z^{\oplus}\in A\right)=\Lambda^{\oplus}(A).$
\end{proof}

By the coupling lemma, the probability measure $\rho$ on $[0,1]^2$ defined by 
	\begin{align*}
		\rho(A\times B)=\Lambda(\{(y,z): y\in A, y+z\in B\}),\qquad A,B\in\mathcal{B}([0,1]),
	\end{align*} 
	is a monotone coupling of $\Lambda^{\ominus}$ and $\Lambda^{\oplus}$ (that is $\rho\{(y,z):y>z\}=0$). We sometimes (by a slight abuse of notation) refer to $\Lambda$ as the \emph{selective coupling} of $\Lambda^{\ominus}$ and $\Lambda^{\oplus}$. We will usually apply it in the form
	\begin{align}\label{eq:decomp-int}
	\int_{\Delta}f(y)\Lambda(dy,dz)=\int_{[0,1]}f(y)\Lambda^{\ominus}(dy),\ \text{and} \ \int_{\Delta}f(y+z)\Lambda(dy,dz)=\int_{[0,1]}f(z)\Lambda^{\oplus}(dz)
	\end{align}

for measurable functions $f:[0,1]\mapsto[0,1]$. In Section \ref{sect:generalisations} we explain how to apply this Lemma in the case of finite measures $\Lambda^{\oplus}$ and $\Lambda^{\ominus}$ and provide a small caveat on equation \eqref{eq:decomp-int} in this case. Moreover, we provide a brief discussion of uniqueness questions, and an interpretation in terms of optimal transport.

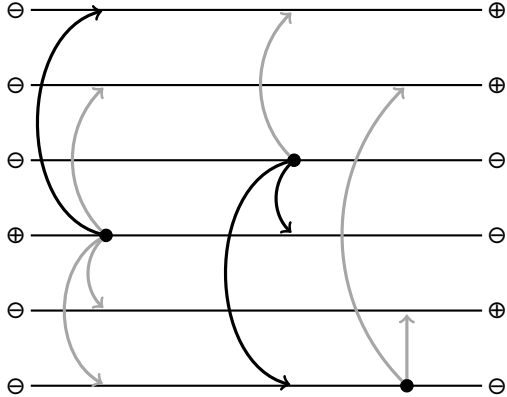
\begin{figure}[t]
    \centering
    \scalebox{1}{
        \begin{tikzpicture}[darkstyle/.style={circle,inner sep=0pt}]
        
        %Lines
        \draw[-,thick](0,0)--(6,0);
        \draw[-,thick](0,1)--(6,1);
        \draw[-,thick](0,2)--(6,2);
        \draw[-,thick](0,3)--(6,3);
        \draw[-,thick](0,4)--(6,4);
        \draw[-,thick](0,5)--(6,5);

       %Dots/Individuals
        \node (A1) at (1,0) [circle,scale=0.3]{} ;
        \node (A2) at (1,1) [circle,scale=0.3]{} ;
        \node (A3) at (1,2) [circle, scale=0.5, draw, fill=black]{} ;
        \node (A4) at (1,3) [circle, scale=0.3]{} ;
        \node (A5) at (1,4) [circle, scale=0.3]{} ;
        \node (A6) at (1,5) [circle,scale=0.3]{} ;
        
         \node (B1) at (3.5,0) [circle,scale=0.3]{} ;
        \node (B2) at (3.5,1) [circle,scale=0.3]{} ;
        \node (B3) at (3.5,2) [circle,scale=0.3]{} ;
        \node (B4) at (3.5,3) [circle,scale=0.5, draw, fill=black]{} ;
        \node (B5) at (3.5,4) [circle,scale=0.3]{} ;
        \node (B6) at (3.5,5) [circle,scale=0.3]{} ;
        
        \node (C1) at (5,0) [circle,scale=0.5, draw, fill=black]{} ;
        \node (C2) at (5,1) [circle,scale=0.3]{} ;
        \node (C3) at (5,2) [circle,scale=0.3]{} ;
        \node (C4) at (5,3) [circle,scale=0.3]{} ;
        \node (C5) at (5,4) [circle,scale=0.3]{} ;
        \node (C6) at (5,5) [circle,scale=0.3]{} ;

        %Arrows
         \draw[->, very thick] (A3) to[bend left=75] (A6);
         \draw[->, very thick, color=gray!70!white] (A3) to[bend left=45] (A5);
         \draw[->, very thick, color=gray!70!white] (A3) to[bend right=45] (A2);
          \draw[->, very thick, color=gray!70!white] (A3) to[bend right=60] (A1);
         
         \draw[->, very thick] (B4) to[bend right=75] (B1);
         \draw[->, very thick] (B4) to[bend right=45] (B3);
        % \draw[->, ultra thick] (B4) to[bend left=45] (B5);
         \draw[->, very thick, color=gray!70!white] (B4) to[bend left=45] (B6);
         
          \draw[->, very thick, color=gray!70!white] (C1) to (C2);
          \draw[->, very thick, color=gray!70!white] (C1) to[bend left=45](C5);
          
          %Types
          \node () at (-0.2,0){$\bm{\ominus}$};
          \node () at (-0.2,1){$\bm{\ominus}$};
          \node () at (-0.2,2){$\bm{\oplus}$};
          \node () at (-0.2,3){$\bm{\ominus}$};
          \node () at (-0.2,4){$\bm{\ominus}$};
          \node () at (-0.2,5){$\bm{\ominus}$};
          
           \node () at (6.2,0){$\bm{\ominus}$};
          \node () at (6.2,1){$\bm{\oplus}$};
          \node () at (6.2,2){$\bm{\ominus}$};
          \node () at (6.2,3){$\bm{\ominus}$};
          \node () at (6.2,4){$\bm{\oplus}$};
          \node () at (6.2,5){$\bm{\oplus}$};
        \end{tikzpicture}
        }   
      \caption{The realisation of the  $\Lambda$-asymmetric frequency process from Figure \ref{fig:frequency} now in terms of the coupling construction.  Neutral arrows are black, selective arrows grey. Individuals of type $\oplus$ may reproduce through any arrow, individuals of type $\ominus$ only through neutral arrows. Therefore some of the arrows and the last reproductive event weren't present in Figure \ref{fig:frequency}, where the measures applied to determine the reproduction depended on the type of the reproducing individuals.}\label{fig:frequency_Lambda}
\end{figure}

With this coupling, we will construct the $\Lambda$-asymmetric ancestral selection graph.  In order to make the construction more transparent, we first provide an alternative description of the $\Lambda-$asymmetric Moran model, using the stochastic partial order. Assume that the measures $\Lambda^{\oplus}$ and $\Lambda^{\ominus}$ satisfy $\Lambda^{\ominus}\leq\Lambda^{\oplus}$. Let $\Lambda$ be its selective coupling in the sense of Lemma \ref{descom_measure_new}.  This means that the random variable $Y$ that was sampled in Definition \ref{def:Lambda-Moran} according to either $\Lambda^{\oplus}$ or $\Lambda^{\ominus},$ depending on the type of the reproducing individual, can now be sampled instead as a pair $(Y,Z)$ according to $\Lambda$ from $\Delta,$ such that individuals are replaced with probability $Y$ if a type $\ominus$ individual reproduces, and individuals are replaced with probability $Y+Z$ if a type $\oplus$ individual reproduces.  The rates of the frequency process can thus be rewritten as (cf. \eqref{eq:decomp-int})
\[
x\mapsto \begin{cases} x+\frac{k}{N}&  \text{ at rate } x\int_{\Delta} \binom{(1-x)N}{k} y^k (1-y)^{(1-x)N-k}\Lambda(dy,dz), \quad k=1,...,(1-x)N \\
	x-\frac{k}{N}& \text{ at rate } (1-x) \int_{\Delta}\binom{xN}{k}(y+z)^k(1-(y+z))^{xN-k}\Lambda(dy,dz), \quad k=1,...,Nx.
\end{cases}\]
The generator \eqref{gen_frequency} becomes 
\begin{align}
	\mathcal{B}^Nf(x)
	&= x\int_{\Delta}\E\left[f\left(x+\frac{1}{N}\text{Binom}(N(1-x),y)\right)-f(x)\right]\Lambda(dy,dz)\notag\\
	&+(1-x)\int_{\Delta}\E\left[f\left(x-\frac{1}{N}\text{Binom}(Nx,y+z)\right)-f(x)\right]\Lambda(dy,dz).
\end{align}		

We are now ready to define the central object of this paper, which is the ancestral selection graph for the $\Lambda-$asymmetric Moran model. We give a construction in the spirit of Neuhauser and Krone in terms of a Poisson process driven by the selective coupling $\Lambda.$

\begin{defn}[$\Lambda-$asymmetric ancestral selection graph, ASG]\label{def:ancestral_selection_graph} Consider a Poisson processes $M^N$ with values in $\mathbb{R}_+\times [N]\times \Delta\times[0,1]^N$ and intensity measure $dt\times dm\times \Lambda(dy,dz)\times du_1\times du_2...\times du_N$, where $dm$ denotes the uniform measure
%\blue{[For the proof of Proposition \ref{gen_freq_ASG} I guess we need that $m$ is the uniform measure on $[N]$ not the counting measure? Please see my comment in \eqref{ito_1}]}
on $[N]$. Each point $(t,i)\in\mathbb{R}\times [N]$, represents the $i$-th individual alive at time $t$. 
	\begin{itemize}
		\item  We say that at time $t$ there is a \emph{neutral arrow} between $i$ and $j$ if there is a point \linebreak $(t,i,y,z,u_1,u_2,...,u_N)\in M^N$ such that $u_j\in[0,y]$.
		\item We say that at time $t$ there is a \emph{selective arrow} between $i$ and $j$ if there is a point \linebreak $(t,i,y,z,u_1,u_2,...,u_N)\in M^N$ such that $u_j\in [y,y+z]$.
	\end{itemize}
	The ancestral selection graph is then given by $(\R_+\times [N], M^N).$	
\end{defn}

In the graphical representation of the ancestral selection graph,  as usual, individuals are represented by lines, and reproductive events are denoted by arrows, where the arrow starts at the reproducing individual, and the tip points to the line of (potential) offspring.  Here we represent the reproducing individuals by filled dots, the neutral arrows are black and the selective arrows are grey, see Figure \ref{fig:frequency_Lambda}. Neutral and selective arrows may occur at the same time.  Observe that formally we can have both neutral and selective arrows from $i$ to itself, but they won't have an effect on the frequency process. The ancestral selection graph may be constructed using any coupling that satisfies the conditions of Lemma \ref{descom_measure_new}. In section \ref{sect:generalisations} we show that the expected number of selective arrows will be the same under any such coupling, while the Strassen coupling applied in the proof of the coupling Lemma minimises the variance of the number of these arrows.

From the graphical representation and the Poisson point process construction we can derive the frequency process.  Heuristically, at an arrival of the Poisson process, the reproducing individual $i$ is chosen uniformly at random, and the reproductive strength is determined by the measure $\Lambda.$ The uniform values $u_j$ then determine whether individual $j$ participates in a neutral or selective reproduction event. After introducing types, type $\oplus$ will be propagated (that is, the individual at the tip ot the arrow will be replaced) both through neutral and selective arrows, while type $\ominus$ will be propagated only through neutral arrows.  Hence we can construct the frequency process from the ASG by distributing types at time $t=0,$ and propagating them forward in time according to the arrows on the ASG.  

\begin{prop}\label{prop:frequency_equal}
The frequency process constructed from the $\Lambda$-asymmetric ancestral selection graph by propagating type $\oplus$ through both neutral and selective arrows, and type $\ominus$ through neutral arrows only, is the same as the frequency process of the $\Lambda$-asymmetric Moran model.
\end{prop}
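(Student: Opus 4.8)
\emph{Proof strategy.} Both the frequency process of the $\Lambda$-asymmetric Moran model and the process read off from the ancestral selection graph are piecewise-constant, pure-jump processes on the finite state space $S_N$. Hence, to prove that they have the same law it suffices to start them from the same initial type configuration and to check that their infinitesimal transition rates coincide, i.e.\ that the generator of the ASG-derived frequency process equals $\mathcal{B}^N$. Since the rewriting of $\mathcal{B}^N$ in terms of the selective coupling $\Lambda$ is already identified with the original generator \eqref{gen_frequency} through \eqref{eq:decomp-int} and Lemma \ref{descom_measure_new}, it is enough to match the ASG rates with the $\Lambda$-form of $\mathcal{B}^N$.

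The plan is to compute the jump rates directly from the Poisson construction of Definition \ref{def:ancestral_selection_graph}. A point $(t,i,y,z,u_1,\dots,u_N)\in M^N$ encodes a reproduction event in which the parent $i$ is uniform on $[N]$, so each fixed individual is the parent at rate $1/N$ and the total reproduction rate is $\Lambda(\Delta)=1$; conditional on the event the marks $u_1,\dots,u_N$ are i.i.d.\ uniform on $[0,1]$. I would then split according to the type of $i$ under the propagation rule. If $i$ has type $\ominus$ --- an event of total rate $x$ once we sum over the $xN$ individuals of that type --- type is propagated only along neutral arrows, so a given individual $j$ of the opposite type $\oplus$ changes to $\ominus$ precisely when $u_j\in[0,y]$, an event of probability $y$. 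By independence of the marks, the number of such switches among the $N(1-x)$ individuals of type $\oplus$ is $\text{Binom}(N(1-x),y)$, producing the transition $x\mapsto x+\tfrac{1}{N}\text{Binom}(N(1-x),y)$. Symmetrically, if $i$ has type $\oplus$ (total rate $1-x$), type is propagated through both neutral and selective arrows, so a given $\ominus$ individual switches to $\oplus$ when $u_j\in[0,y+z]$, probability $y+z$, yielding $x\mapsto x-\tfrac{1}{N}\text{Binom}(Nx,y+z)$. Arrows pointing to individuals of the same type, as well as the self-arrows noted after Definition \ref{def:ancestral_selection_graph}, leave the frequency unchanged and may be ignored.

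Integrating these conditional jumps against $\Lambda(dy,dz)$ and weighting the two cases by the rates $x$ and $1-x$ reproduces exactly the two terms of the $\Lambda$-form of the generator, hence equals $\mathcal{B}^N$. Because the resulting rates depend on the configuration only through the frequency $x$ and the model is exchangeable, the derived configuration-valued process is strongly lumpable and its projection $X^N$ is indeed a Markov chain on $S_N$ with generator $\mathcal{B}^N$; together with the common initial law this gives equality of the two frequency processes.

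I expect the only genuinely delicate points to be bookkeeping rather than conceptual. One must justify that $X^N$ read off the graph is Markov, which rests on the independence of the uniform marks $u_j$ across individuals within a single event and the independence of the Poisson process over disjoint time intervals; the same independence is what turns the per-individual switching probabilities $y$ and $y+z$ into the binomial counts, and one should also track that only opposite-type individuals contribute to a change of $x$. The conceptual heart is the asymmetry in the propagation rule --- probability $y$ along neutral arrows for type $\ominus$ versus probability $y+z$ along neutral and selective arrows for type $\oplus$ --- which is precisely what the selective coupling $\Lambda$ of Lemma \ref{descom_measure_new} was designed to encode.
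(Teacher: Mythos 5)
Your proposal is correct and takes essentially the same approach as the paper: the paper likewise proves the statement by computing the generator of the frequency process read off the ASG and checking that it coincides with the $\Lambda$-form of $\mathcal{B}^N$, except that it formalizes your direct Poisson-rate computation through the SDE representation \eqref{sde_ASG_1} and It\^o's formula (Proposition \ref{gen_freq_ASG}) before comparing generators exactly as you do. Your closing remark on Markovianity via lumpability and exchangeability plays the role of the paper's assertion that the ASG-derived frequency process is a Markov chain adapted to the forward filtration.
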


We will give a more formal definition of the frequency process and a proof of this result in section \ref{sect:ancestral_process} below.

\section{Ancestral process, sampling function and duality}\label{sect:ancestral_process}
In this section we will formalize the connection between the $\Lambda$-asymmetric Moran model and the ancestral selection graph from Definition \ref{def:ancestral_selection_graph}. We will also define the ancestral line counting process and state its pathwise sampling duality with the frequency process.

We start by introducing suitable forward and backward filtrations.  For any subset of individuals $S\subset[N]$ and any time $T\in \R_+$ we will write $S_{T}=\{(T,i): i\in S\}.$ We think of $S_T$ as a (random) sample of individuals taken at time $T$.  We will follow the construction detailed in  \cite{GSW} and define the forward and backward filtrations of our ancestral selection graph.

\begin{defn}
Let $(\mathbb{R}_+\times [N], M^N)$ be the $\Lambda$-asymmetric ancestral selection graph, and let $S_T$ be a sample as defined above. 
	\begin{itemize}
		\item  Let $\mathcal{F}^T_t$ denote the sigma algebra generated by $S_T$ and the restriction of $M^N$ to $[T,T+t]$. Then $\{\mathcal{F}^T_t\}_{t\geq 0}$ is the \emph{forward filtration}.
		\item Let $\mathcal{P}^T_t$ denote the sigma algebra generated by $S_T$ and the restriction of $M^N$ to $[T,T-t]$. Then $\{\mathcal{P}^T_t\}_{t\geq 0}$ is the \emph{backward filtration}.
	\end{itemize}
\end{defn}

The ancestral selection graph is particularly useful as a construction that can be used both forward and backward in time. In order to utilize this, we introduce the notion of ancestral path in the ancestral selection graph, which will go backward in time. 

\begin{defn}\label{def:ancestral_path}
Let $t,s\in \R_+, i,j\in[N].$	An \emph{ancestral path} going from $(t+s,i)$ to $(t,j)$ is the graph of a c\`adl\`ag function $f:[t,t+s]\mapsto [N]$ such that:
	\begin{enumerate}
		\item $f(t)=j$, $f(t+s)=i$. 
		\item If $f(u^-)=k\neq f(u)=l$ then there is an arrow (neutral or selective) in the ancestral selection graph $M^N$ from $(u,k)$ to $(u,l)$.
		\item If there is an arrow from $(u,k)$ to $(u,l)$, then $f(u^-)\neq l$. 
	\end{enumerate}
	We say that an individual $(t,j)$ is a \emph{potential ancestor} of $(t+s,i)$ if there exists an ancestral path going from $(t+s,i)$ to $(t,j)$ and in this case we write $(t,j)\sim(t+s,i)$. 
\end{defn}
Graphically,  the concept of potential ancestor and of ancestral paths is described in Figure \ref{fig:ancestral}.  

\begin{figure}[t]

{ {\begin{minipage}[t]{240pt}
  % \centering
    \scalebox{1}{
        \begin{tikzpicture} %[darkstyle/.style={inner sep=0pt}]
        
        %Lines
        \draw[-,thick, color=gray!70!white](0,0)--(3.5,0);
        \draw[-,thick, color=gray!70!white](5,0)--(6,0);
        \draw[-,ultra thick, color=black](3.5,0)--(5,0);
        \draw[-,ultra thick, color=black](0,1)--(6,1);
        \draw[-,ultra thick](3.5,2)--(6,2);
       % \draw[-,thick, color=gray!70!white](0,3)--(3.5,3);
        \draw[-,ultra thick, color=black](0,3)--(6,3);
        \draw[-,ultra thick, color=black](0,2)--(1,2);
      \draw[-,thick, color=gray!70!white](1,2)--(3.5,2);
        \draw[-,thick, color=gray!70!white](0,4)--(6,4);
        \draw[-,thick, color=gray!70!white](0,5)--(6,5);

       %Dots/Individuals
        \node (A1) at (1,0) [circle,scale=0.3]{} ;
        \node (A2) at (1,1) [circle,scale=0.3]{} ;
        \node (A3) at (1,2) [circle, scale=0.5, draw, fill=black]{} ;
        \node (A4) at (1,3) [circle, scale=0.3]{} ;
        \node (A5) at (1,4) [circle, scale=0.3]{} ;
        \node (A6) at (1,5) [circle,scale=0.3]{} ;
        
         \node (B1) at (3.5,0) [circle,scale=0.3]{} ;
        \node (B2) at (3.5,1) [circle,scale=0.3]{} ;
        \node (B3) at (3.5,2) [circle,scale=0.3]{} ;
        \node (B4) at (3.5,3) [circle,scale=0.5, draw, fill=black]{} ;
        \node (B5) at (3.5,4) [circle,scale=0.3]{} ;
        \node (B6) at (3.5,5) [circle,scale=0.3]{} ;
        
        \node (C1) at (5,0) [circle,scale=0.5, draw, fill=black]{} ;
        \node (C2) at (5,1) [circle,scale=0.3]{} ;
        \node (C3) at (5,2) [circle,scale=0.3]{} ;
        \node (C4) at (5,3) [circle,scale=0.3]{} ;
        \node (C5) at (5,4) [circle,scale=0.3]{} ;
        \node (C6) at (5,5) [circle,scale=0.3]{} ;

        %Arrows
         \draw[->, very thick] (A3) to[bend left=75] (A6);
         \draw[->, very thick, color=gray!70!white] (A3) to[bend left=45] (A5);
         \draw[->, very thick, color=gray!70!white] (A3) to[bend right=45] (A2);
          \draw[->, very thick, color=gray!70!white] (A3) to[bend right=60] (A1);
         
         \draw[->, very thick] (B4) to[bend right=75] (B1);
         \draw[->, very thick] (B4) to[bend right=45] (B3);
        % \draw[->, ultra thick] (B4) to[bend left=45] (B5);
         \draw[->, very thick, color=gray!70!white] (B4) to[bend left=45] (B6);
         
          \draw[->, very thick, color=gray!70!white] (C1) to (C2);
          \draw[->, very thick, color=gray!70!white] (C1) to[bend left=45](C5);
          
          %Ancestry
         % \node() at (6.2,-0.05){*};
           \node() at (6.2,1.95){*};
            \node() at (6.2,2.95){*};
             \node() at (6.2,0.95){*};
          %Types
%          \node () at (-0.2,0){$\bm{\ominus}$};
%          \node () at (-0.2,1){$\bm{\ominus}$};
%          \node () at (-0.2,2){$\bm{\oplus}$};
%          \node () at (-0.2,3){$\bm{\ominus}$};
%          \node () at (-0.2,4){$\bm{\ominus}$};
%          \node () at (-0.2,5){$\bm{\ominus}$};
%          
%           \node () at (6.2,0){$\bm{\ominus}$};
%          \node () at (6.2,1){$\bm{\oplus}$};
%          \node () at (6.2,2){$\bm{\ominus}$};
%          \node () at (6.2,3){$\bm{\ominus}$};
%          \node () at (6.2,4){$\bm{\oplus}$};
%          \node () at (6.2,5){$\bm{\oplus}$};
        \end{tikzpicture}
        }   
         \end{minipage}}}
  {\begin{minipage}[t]{240pt}
   \scalebox{1}{\hspace{-0.6cm}
        \begin{tikzpicture} %[darkstyle/.style={inner sep=0pt}]
        
        %Lines
        \draw[-,thick, color=gray!70!white](0,0)--(3.5,0);
        \draw[-,ultra thick, color=black](3.5,0)--(5,0);
        \draw[-,thick, color=gray!70!white](5,0)--(6,0);
        \draw[-,thick, color=gray!70!white](0,1)--(6,1);
        \draw[-,ultra thick, color=black](0,2)--(1,2);
      \draw[-,thick, color=gray!70!white](1,2)--(6,2);
        %\draw[-,ultra thick](3.5,2)--(6,2);
       % \draw[-,thick, color=gray!70!white](0,3)--(3.5,3);
        \draw[-,ultra thick,  color=black](0,3)--(3.5,3);
         \draw[-,thick, color=gray!70!white](3.5,3)--(6,3);
        \draw[-,ultra thick, color=black](0,4)--(6,4);
        \draw[-,thick, color=gray!70!white](0,5)--(6,5);

       %Dots/Individuals
        \node (A1) at (1,0) [circle,scale=0.3]{} ;
        \node (A2) at (1,1) [circle,scale=0.3]{} ;
        \node (A3) at (1,2) [circle, scale=0.5, draw, fill=black]{} ;
        \node (A4) at (1,3) [circle, scale=0.3]{} ;
        \node (A5) at (1,4) [circle, scale=0.3]{} ;
        \node (A6) at (1,5) [circle,scale=0.3]{} ;
        
         \node (B1) at (3.5,0) [circle,scale=0.3]{} ;
        \node (B2) at (3.5,1) [circle,scale=0.3]{} ;
        \node (B3) at (3.5,2) [circle,scale=0.3]{} ;
        \node (B4) at (3.5,3) [circle,scale=0.5, draw, fill=black]{} ;
        \node (B5) at (3.5,4) [circle,scale=0.3]{} ;
        \node (B6) at (3.5,5) [circle,scale=0.3]{} ;
        
        \node (C1) at (5,0) [circle,scale=0.5, draw, fill=black]{} ;
        \node (C2) at (5,1) [circle,scale=0.3]{} ;
        \node (C3) at (5,2) [circle,scale=0.3]{} ;
        \node (C4) at (5,3) [circle,scale=0.3]{} ;
        \node (C5) at (5,4) [circle,scale=0.3]{} ;
        \node (C6) at (5,5) [circle,scale=0.3]{} ;

        %Arrows
         \draw[->, very thick] (A3) to[bend left=75] (A6);
         \draw[->, very thick, color=gray!70!white] (A3) to[bend left=45] (A5);
         \draw[->, very thick, color=gray!70!white] (A3) to[bend right=45] (A2);
         \draw[->, very thick, color=gray!70!white] (A3) to[bend right=60] (A1);
         
         \draw[->, very thick] (B4) to[bend right=75] (B1);
         \draw[->, very thick] (B4) to[bend right=45] (B3);
        % \draw[->, ultra thick] (B4) to[bend left=45] (B5);
         \draw[->, very thick, color=gray!70!white] (B4) to[bend left=45] (B6);
         
          \draw[->, very thick, color=gray!70!white] (C1) to (C2);
          \draw[->, very thick, color=gray!70!white] (C1) to[bend left=45](C5);

          %Types
%          \node () at (-0.2,0){$\bm{\ominus}$};
%          \node () at (-0.2,1){$\bm{\ominus}$};
          \node () at (-0.2,2){$\bm{\oplus}$};
          \node () at (-0.2,3){$\bm{\ominus}$};
         \node () at (-0.2,4){$\bm{\ominus}$};
%          \node () at (-0.2,5){$\bm{\ominus}$};
%          
%           \node () at (6.2,0){$\bm{\ominus}$};
%          \node () at (6.2,1){$\bm{\oplus}$};
%          \node () at (6.2,2){$\bm{\ominus}$};
%          \node () at (6.2,3){$\bm{\ominus}$};
          \node () at (6.2,4){$\bm{\oplus}$};
%          \node () at (6.2,5){$\bm{\oplus}$};
        \end{tikzpicture}
        }   
  
    \end{minipage}}

      \caption{On the left: The ancestral line process of a sample of three individuals (marked by an asterisk),  using the realisation of the $\Lambda$-asymmetric ancestral selection graph from Figure \ref{fig:frequency_Lambda} and going backward in time (right to left).  Lines of potential ancestors are colored in black.  At a reproductive event, lines from individuals hit by a black (neutral) arrow are alway merged with the reproducing line at the origin of the arrow, where for those hit by a grey (selective) arrow, both lines are continued. On the right: The ancestry of one individual. The type of this individual is $\oplus$ if one potential ancestor has type $\oplus$, and $\ominus$ otherwise.}\label{fig:ancestral}

\end{figure}
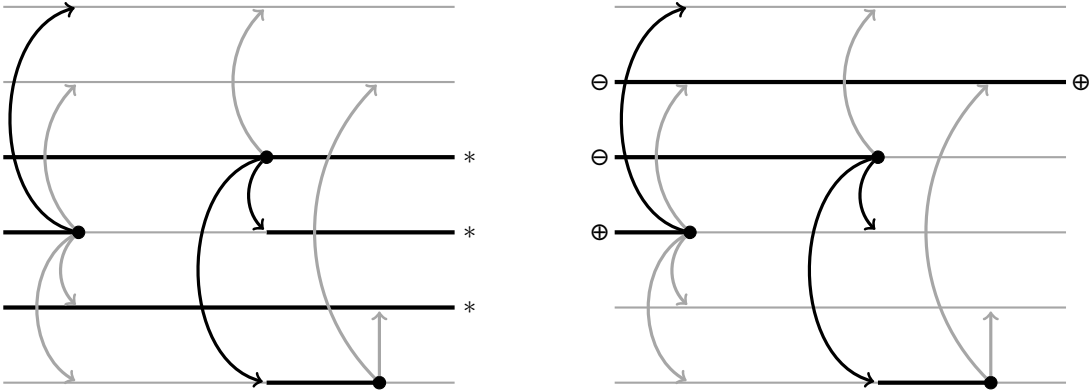

Note that, as opposed to the Moran model, an individual in this model can have many potential ancestors at any given time in its past.  Moreover, recall that we introduced neutral and selective arrows, which correspond to the measures $\Lambda^{\oplus}$ and $\Lambda^{\ominus}$ in our original definition of the $\Lambda$-asymmetric Moran model, where individuals of type $\oplus$ have a selective advantage by reproducing according to $\Lambda^{\oplus}$ and individuals of type $\ominus$ reproduce according to $\Lambda^{\ominus}$, with $\Lambda^{\ominus}\leq \Lambda^{\oplus}.$ This means that if we assign type $\ominus$ to all individuals of a sample $S_T$ taken at time $T$, and type $\oplus$ to the individuals of $S_T^c$,  then an individual $i$ at time $T+s$ is of type $\oplus$ if and only if there exists an ancestral path from  $(T+s,i)$ to at least one individual in $S^c_T.$  In this case,  we write $\tau((s,i))=\oplus$. Otherwise, if all the potential ancestors of $(T+s,i)$ at time $T$ belong to $S_T$, we write $\tau((T+s,i))=\ominus$. 

We therefore formally define the \emph{frequency process in the $\Lambda-$asymmetric ancestral selection graph}
by assigning types $\ominus,\oplus$ to all individuals at time $T>0,$ and denote by $X^{N,T}=\{X^{N,T}_t:t\geq T\}$ the frequency of type $\ominus$ individuals at time $t>T$.  Thus,
$$ X^{N,T}_t=\frac{1}{N}\sum_{i=1}^N 1_{\{\tau((t,i))=\ominus\}},\qquad t\geq T,$$
where $\tau(t,i)$ is constructed as explained above using the ancestral lines of the ASG. By construction, it is straightforward to check that the process $(X_t^{N,T})_{t\geq T}$ satisfies
\begin{align}\label{sde_ASG_1}	X^{N,T}_t&=X^{N,T}_T+\sum_{i=1}^N\int_T^t\int_{\Delta}\int_{[0,1]^N}1_{\{\tau(s,i)=\ominus\}}\frac{1}{N}\sum_{j=1}^N1_{\{\tau(s,j)=\oplus,u_j\leq y\}}M^N(di,ds,dy,dz,du)\notag\\
	&-\sum_{i=1}^N\int_T^t\int_{\Delta}\int_{[0,1]^N}1_{\{\tau(s,i)=\oplus\}}\frac{1}{N}\sum_{j=1}^N1_{\{\tau(s,j)=\ominus,u_j\leq y+z\}}M^N(di,ds,dy,dz,du), \quad t\geq T.
\end{align}

\begin{prop}\label{gen_freq_ASG}
Fix $x\in[0,1]$ and assume that $X^{N,T}_T=x$, then the process $X^{N,T}$ is a $\{\mathcal{F}^T_t\}_{t\geq 0}$ measurable continuous-time Markov chain with values in $S_N$, and its infinitesimal generator $\mathcal{B}^{N,T}$ is given for any $f\in\mathcal{C}^2([0,1])$ by
\begin{align}\label{gen_ASG_1}
\mathcal{B}^{N,T}f(x)&=x\int_{\Delta}\E\left[f\left(x+\frac{1}{N} \textup{Binom}(N(1-x),y)\right)-f(x)\right]\Lambda(dy,dz)\notag\\
&+(1-x)\int_{\Delta}\E\left[f\left(x-\frac{1}{N}\textup{Binom}(Nx,y+z)\right)-f(x)\right]\Lambda(dy,dz). %,\qquad x\in[0,1].
\end{align}
\end{prop}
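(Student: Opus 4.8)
The plan is to read off the dynamics of $X^{N,T}$ directly from the driving Poisson point process $M^N$ and the forward type-propagation rule, and then to identify the generator by decomposing each arrival according to the type of the reproducing individual. Throughout I work with the full type configuration $\eta_s=(\tau(s,i))_{i\in[N]}\in\{\oplus,\ominus\}^N$ for $s\ge T$, of which $X^{N,T}_s=\frac1N\sum_{i=1}^N 1_{\{\tau(s,i)=\ominus\}}$ is a symmetric (permutation invariant) functional.

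First I would establish the Markov property. Since $\Lambda$ is finite and $dm$ has finite total mass, the intensity of $M^N$ is finite on bounded time windows, so a.s.\ only finitely many points of $M^N$ fall in $[T,T+t]$; hence $\eta$, and therefore $X^{N,T}$, is a pure jump process that only changes at the arrival times of $M^N$. At an arrival $(s,i,y,z,u_1,\dots,u_N)$ the new configuration $\eta_s$ is the explicit deterministic function of $\eta_{s^-}$ and $(i,y,z,u)$ dictated by the propagation rule: if $\tau(s^-,i)=\ominus$ then every $j$ with $\tau(s^-,j)=\oplus$ and $u_j\le y$ switches to $\ominus$, while if $\tau(s^-,i)=\oplus$ then every $j$ with $\tau(s^-,j)=\ominus$ and $u_j\le y+z$ switches to $\oplus$ (self-arrows $j=i$ are irrelevant). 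Because the restriction of $M^N$ to $[T+t,\infty)$ is independent of $\mathcal F^T_t$ by the defining independence property of Poisson processes, and because $\eta_{T+t}$ is $\mathcal F^T_t$-measurable, the evolution of $\eta$ after time $T+t$ depends on the past only through $\eta_{T+t}$; this gives the Markov property of $\eta$ with respect to $\{\mathcal F^T_t\}_{t\ge0}$, following the construction in \cite{GSW}. That $X^{N,T}$ is itself Markov then follows from exchangeability: the law of $M^N$ is invariant under relabelling of the individuals in $[N]$, the updating map commutes with permutations, and $X^{N,T}$ is permutation invariant, so $X^{N,T}$ is a Markov function (a lumping) of $\eta$, with transition law depending on $\eta_{T+t}$ only through $X^{N,T}_{T+t}$.

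To compute the generator at a state with $X^{N,T}_s=x$, I would integrate the jump contributions against the intensity $dm\,\Lambda(dy,dz)\,du_1\cdots du_N$. Since $dm$ is uniform on $[N]$ and there are $xN$ individuals of type $\ominus$ and $(1-x)N$ of type $\oplus$, the reproducing individual is of type $\ominus$ at total rate $x$ and of type $\oplus$ at total rate $1-x$ (using $\Lambda(\Delta)=1$). Conditionally on a $\ominus$-reproduction of strength $(y,z)$, the individuals that change type are exactly the type-$\oplus$ individuals $j$ with $u_j\le y$; as the $u_j$ are i.i.d.\ uniform on $[0,1]$ and there are $(1-x)N$ of them, the number of flips is $\mathrm{Binom}(N(1-x),y)$, producing the increment $+\frac1N\mathrm{Binom}(N(1-x),y)$. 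Symmetrically, a $\oplus$-reproduction of strength $(y,z)$ flips the type-$\ominus$ individuals $j$ with $u_j\le y+z$, of which there are $\mathrm{Binom}(Nx,y+z)$, giving the increment $-\frac1N\mathrm{Binom}(Nx,y+z)$. Taking expectations over the uniforms and integrating over $\Lambda$ yields exactly \eqref{gen_ASG_1}; equivalently one obtains the same expression by applying the generator of $\eta$ to the symmetric function $f\circ X^{N,T}$, or by computing $\E[\,\cdot\mid\mathcal F^T_{s-T}]$ from the semimartingale representation \eqref{sde_ASG_1}.

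The step I expect to be most delicate is the rigorous justification that the frequency process alone, rather than the full configuration $\eta$, is Markovian: this requires the exchangeability/lumping argument above (permutation invariance of the intensity of $M^N$ together with equivariance of the update map), and some care that the initial sample $S_T$ enters only through its size $|S_T|=xN$. The generator computation itself is then routine once the binomial counting is set up, the only points needing attention being that self-arrows $j=i$ never change a type, and that, since the reproducing individual already carries the propagated type, the binomial is taken over the $N(1-x)$ (resp.\ $Nx$) individuals of the opposite type, with no correction for $i$.
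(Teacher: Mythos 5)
Your proof is correct, but it takes a genuinely different route from the paper's. The paper derives \eqref{gen_ASG_1} from the semimartingale identity \eqref{sde_ASG_1}: it applies It\^o's formula for jump processes to $f(X^{N,T}_t)$, takes expectations (so the Poisson random measure is replaced by its intensity and the sums of indicators over $j$ collapse into binomial expectations), and then differentiates the resulting identity \eqref{ito_1} at $t=T$; the Markov property itself is dispatched with the single sentence that it ``follows from \eqref{sde_ASG_1}''. You instead compute the transition rates directly from the Poisson intensity $dm\,\Lambda(dy,dz)\,du_1\cdots du_N$ (reproducer of type $\ominus$ at total rate $x$, flip count $\mathrm{Binom}(N(1-x),y)$, and symmetrically for type $\oplus$), and, more substantively, you supply an explicit lumping argument for why the frequency alone is Markov: the configuration process $\eta$ is Markov for $\{\mathcal{F}^T_t\}_{t\geq 0}$, the law of $M^N$ is permutation invariant, the update maps are permutation equivariant, and $X^{N,T}$ is a symmetric function of $\eta$. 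This last step is a genuine addition rather than a restatement: \eqref{sde_ASG_1} is not an autonomous equation for $X^{N,T}$ (its coefficients involve the full type configuration through the indicators $1_{\{\tau(s,i)=\ominus\}}$, $1_{\{\tau(s,j)=\oplus\}}$), so the paper's one-line justification of the Markov property implicitly relies on exactly the exchangeability argument you spell out. What the paper's It\^o route buys in exchange is the integrated Dynkin-type identity \eqref{ito_1}, which is the form most convenient for the scaling-limit analysis later in the paper; your rate computation reaches the same generator more elementarily, and both arguments ultimately rest on the same observation that, conditionally on the reproducer's type and strength, the i.i.d.\ uniforms make the number of type flips binomial with parameters depending only on the current frequency.
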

\begin{proof}
The first statement of the proposition follows from \eqref{sde_ASG_1}. Now, using \eqref{sde_ASG_1} and an application of It\^o's formula we obtain for any $f\in\mathcal{C}^2([0,1])$ and $t\geq T$
\begin{align*}
&f(X^{N,T}_t)=f(X_T^{N,T})\notag\\&+\sum_{i=1}^N\int_T^t\int_{\Delta}\int_{[0,1]^N}\Bigg[f\left(X_{s-}^{N,T}+1_{\{\tau(s,i)=\ominus\}}\frac{1}{N}\sum_{j=1}^N1_{\{\tau(s,j)=\oplus,u_j\leq y\}}\right)-f(X_{s-}^{N,T})\Bigg]M^N(di,ds,dy,dz,du)\notag\\
&+\sum_{i=1}^N\int_T^t\int_{\Delta}\int_{[0,1]^N}\Bigg[f\left(X_{s-}^{N,T}-1_{\{\tau(s,i)=\oplus\}}\frac{1}{N}\sum_{j=1}^N1_{\{\tau(s,j)=\ominus,u_j\leq y+z\}}\right)-f(X_{s-}^{N,T})\Bigg]M^N(di,ds,dy,dz,du).
\end{align*}
Taking expectations in the previous identity gives for $t\geq T$
\begin{align}\label{ito_1}
	&\E\left[f(X^{N,T}_t)\right]=\E[f(X_T^{N,T})]\notag\\
	&+\E\Bigg[\int_T^t\int_{\Delta}\int_{[0,1]^N}\frac{1}{N}\sum_{i=1}^N1_{\{\tau(s,i)=\ominus\}}\Bigg[f\left(X_{s-}^{N,T}+\frac{1}{N}\sum_{j=1}^N1_{\{\tau(s,j)=\oplus,u_j\leq y\}}\right)-f(X_{s-}^{N,T})\Bigg]ds\Lambda(dy,dz)du\notag\\
	&+\E\Bigg[\int_T^t\int_{\Delta}\int_{[0,1]^N}\frac{1}{N}\sum_{i=1}^N1_{\{\tau(s,i)=\oplus\}}\Bigg[f\left(X_{s-}^{N,T}-\frac{1}{N}\sum_{j=1}^N1_{\{\tau(s,j)=\ominus,u_j\leq y+z\}}\right)-f(X_{s-}^{N,T})\Bigg]ds\Lambda(dy,dz)du\notag\\
	&=\E[f(X_T^{N,T})]+\E\Bigg[\int_T^t\int_{\Delta}X_{s-}^{N,T}\Bigg[f\left(X_{s-}^{N,T}+\frac{1}{N}\textup{Binom}(N(1-X_{s-}^{N,T}),y)\right)-f(X_{s-}^{N,T})\Bigg]ds\Lambda(dy,dz)\Bigg]\notag\\
	&+\E\Bigg[\int_T^t\int_{\Delta}\int_{[0,1]^N}(1-X_{s-}^{N,T})\Bigg[f\left(X_{s-}^{N,T}-\frac{1}{N}\textup{Binom}(NX_{s-}^{N,T},y+z)\right)-f(X_{s-}^{N,T})\Bigg]ds\Lambda(dy,dz)\Bigg].
\end{align}

Finally, by differentiating \eqref{ito_1} and taking $t=T$ we obtain \eqref{gen_ASG_1}.
\end{proof}	

\begin{proof}[Proof of Proposition \ref{prop:frequency_equal}]
The generator $\mathcal{B}^{N,T}$ has no true dependence on $T$, but only on the cardinality of the sample $S_T$ of individuals of type $\ominus$, which gives the initial frequency $|S_T|/N$ of individuals of type $\ominus$. Comparing $\mathcal{B}^{N,T}$ and $\mathcal{B}^N$ shows that they are equal, hence the processes they generate are equal in distribution. 
\end{proof}
We may thus identify the two frequency processes $(X^{N}_t)_{t\geq 0}$ and $(X_t^{N,T})_{t\geq T}$ with $X^{N}_0=|S_T|/N,$ and call it the frequency process associated with the $\Lambda$-asymmetric ancestral selection graph.

One of the big advantages of the $\Lambda$-ancestral selection graph  is that it also allows for  a $\{\mathcal{P}^T_t\}_{t\geq 0}$ measurable ancestral process,  the ancestral line counting process, which satisfies a duality relation with the frequency process. 

\begin{defn}[Ancestral process]
	Fix a sample $S_T$ of individuals at some time $T\in \mathbb{R}$.  The \emph{ancestral line counting process} $(A^{N,T}_t)_{0\leq t\leq T}$ of a sample taken at time $T$ is given by
	$$
	A^{N,T}_t=\sum_{i=1}^N 1_{\{(T-t,i)\sim (T,j), \text{ for some } (T,j)\in S_{T}\}}, \qquad t\in[0,T].	$$	
\end{defn}

Hence by definition $A_t^{N,T}$ counts the number of potential ancestors living at time $t\in[0,T]$ of individuals from the sample $S_{T}$.  

\begin{prop}
	The process $(A^{N,T}_t)_{0\leq t\leq T}$ is a $\{\mathcal{P}_t\}_{t\geq 0}$ measurable  continuous-time Markov chain with values in $[N]$ starting at $A_0^{N,T}=|S_T|$ and transition rates
\[n\mapsto \begin{cases}
	n-k &  \text{ at rate } \frac{n}{N} \int_{\Delta} \binom{n-1}{k} y^k (1-y)^{n-1-k}\Lambda(dy,dz)\notag\\&\hspace{1.3cm}+(1-\frac{n}{N})\int_{\Delta} \binom{n}{k+1} y^{k+1} (1-y)^{n-k-1}\Lambda(dy,dz),\quad k=1,...,n-1\\
	%n-k+1 &\text{ at rate } (1-\frac{n}{N})\int_{\Delta} \binom{n}{k} y^k (1-y)^{n-k}\Lambda(dy,dz),\quad k=2,...,n \\
	n+1& \text{ at rate } (1-\frac{n}{N})\int_{\Delta} [(1-y)^{n}-(1-y-z)^{n}]\Lambda(dy,dz).
\end{cases}\]
%}
\end{prop}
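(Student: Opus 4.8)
The plan is to work out the effect of a single arrival of the Poisson process $M^N$ on the set of potential ancestors, and to read off the transition rates by conditioning on whether the reproducing individual lies in the current ancestral set. First I would fix $t\in[0,T]$ and write $\mathcal{A}_t\subseteq[N]$ for the set of individuals $i$ with $(T-t,i)\sim(T,j)$ for some $(T,j)\in S_T$, so that $A^{N,T}_t=|\mathcal{A}_t|$. The Markov property relative to $\{\mathcal{P}^T_t\}_{t\geq 0}$ follows from the independence of the increments of $M^N$ on disjoint time intervals: the transition over the backward interval $[T-t-dt,T-t]$ is driven by the Poisson points there, which are independent of $\mathcal{P}^T_t$, and the update of $\mathcal{A}_t$ is a measurable function of $\mathcal{A}_t$ and these new points. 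Since the marks $u_1,\dots,u_N$ attached to each point are i.i.d.\ uniform and the reproducing individual is chosen according to the uniform measure $dm$ of mass $1/N$ per site, the law of the update depends on $\mathcal{A}_t$ only through its cardinality $n$; this is where exchangeability enters and yields that $A^{N,T}$ is a Markov chain on $[N]$.

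For the rates I would examine an arrival with reproducing individual $i$ and marks $(y,z,u_1,\dots,u_N)$, recalling the backward update rule: a neutral arrow from $i$ to an ancestral line forces that line to coalesce into $i$, whereas a selective arrow makes the ancestral line branch, so that $i$ is adjoined as a new potential ancestor. Writing $a=|\{j\in\mathcal{A}_t\setminus\{i\}:u_j\leq y\}|$ for the number of ancestral lines struck by a neutral arrow and $b=|\{j\in\mathcal{A}_t\setminus\{i\}:y<u_j\leq y+z\}|$ for those struck by a selective arrow, the new set removes the $a$ neutrally-hit lines and adjoins $i$ exactly when $i\in\mathcal{A}_t$ or $a+b\geq 1$. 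Conditionally on $(y,z)$ the marks of distinct sites are independent, so each ancestral line is neutrally hit with probability $y$ and selectively hit with probability $z$.

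I would then split according to the uniform choice of $i$. In the case $i\in\mathcal{A}_t$, which carries $dm$-weight $n/N$, the individual $i$ is already present and selective arrows produce no change, so the net change is $-a$ with $a\sim\mathrm{Binom}(n-1,y)$; this gives the first summand of the $n\mapsto n-k$ rate. In the case $i\notin\mathcal{A}_t$, weight $1-n/N$, all $n$ lines lie among the $j\neq i$, hence $a\sim\mathrm{Binom}(n,y)$, and the net change is $1-a$ when $a\geq 1$, equals $+1$ when $a=0$ and $b\geq 1$, and is $0$ otherwise. Setting $1-a=-k$, i.e.\ $a=k+1$, yields the second summand of the $n\mapsto n-k$ rate through $\mathbb{P}(a=k+1)=\binom{n}{k+1}y^{k+1}(1-y)^{n-k-1}$, while the upward move $n\mapsto n+1$ arises precisely from $\{a=0,\,b\geq 1\}$, of probability $(1-y)^n-(1-y-z)^n$. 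Integrating against $\Lambda(dy,dz)$ and weighting by $n/N$ or $1-n/N$ reproduces the stated rates; the ranges $k=1,\dots,n-1$ come out automatically since $a=1$ in the second case and $a=0$ in the first contribute only to the (discarded) diagonal.

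I expect the main obstacle to be the bookkeeping of the update rule rather than any analytic difficulty: one must correctly disentangle the simultaneous action of neutral (coalescing) and selective (branching) arrows emanating from the same reproducing individual, and in particular verify that $i$ is adjoined at most once even when several arrows hit $\mathcal{A}_t$, and that a selective arrow contributes a branching only when no neutral arrow already forces the inclusion of $i$. The clean separation into the cases $i\in\mathcal{A}_t$ and $i\notin\mathcal{A}_t$, together with the conditional independence of the marks given $(y,z)$, is what makes the binomial structure of the two summands transparent.
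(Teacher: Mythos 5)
Your proposal is correct and follows essentially the same route as the paper's proof: a case split on whether the reproducing individual lies in the current set of potential ancestors (weights $n/N$ and $1-n/N$), binomial counts of neutrally and selectively hit lines given $(y,z)$, and the identification of the upward jump with the event of no neutral hit but at least one selective hit, of probability $(1-y)^n-(1-y-z)^n$. Your bookkeeping with $a$ and $b$, the treatment of the diagonal terms, and the exchangeability argument for the Markov property are simply a more explicit rendering of the paper's three-case enumeration.
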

Since again the process doesn't depend on $T$ but only on $|S_T|,$ we drop the $T$ from the notation and call $(A^N_t)_{t\geq 0}$ the ancestral line counting process or process of potential ancestors. For probability measures (and more generally when $\|\Lambda^{\oplus}\|=\|\Lambda^{\ominus}\|$) we can apply \eqref{eq:decomp-int} and rewrite the rates in terms of the original measures:
\[n\mapsto \begin{cases}
	n-k &  \text{ at rate } \frac{n}{N} \int_{[0,1]} \binom{n-1}{k} y^k (1-y)^{n-1-k}\Lambda^{\ominus}(dy)\notag\\&\hspace{1.3cm}+(1-\frac{n}{N})\int_{[0,1]} \binom{n}{k+1} y^{k+1} (1-y)^{n-k-1}\Lambda^{\ominus}(dy),\quad k=1,...,n-1, \\
	%n-k+1 &\text{ at rate } (1-\frac{n}{N})\int_{[0,1]} \binom{n}{k} y^k (1-y)^{n-k}\Lambda^{\ominus}(dy),\quad k=2,...,n \\
	n+1& \text{ at rate } (1-\frac{n}{N})\left[\int_{[0,1]} (1-y)^{n}\Lambda^{\ominus}(dy)-\int_{[0,1]}(1-z)^{n}\Lambda^{\oplus}(dz)\right].
\end{cases}\] 
Hence we can write the dual block counting process and the duality also in terms of the original measures.  In the block counting process, the effect of the selective advantage of $\Lambda^{\oplus}$ is reduced to the question of whether there is at least one selective arrow coming from outside the sample at a time where there is no neutral arrow occurring.  However,  we would like to stress that for the construction of the ancestral selection graph and  therefore also for the \emph{pathwise} construction of the duality the existence of a monotone coupling is crucial.

\begin{proof}
Measurability is clear, as well as the initial value. In order to understand the rates,  we consider the graphical representation and go backward from sampling time $T.$ If we have currently $n$ potential ancestors, then the next event backward in time is one of the following three cases (cf. Figure \ref{fig:ancestral_trans}):
\begin{enumerate}
\item[1.] From one of the $n$ potential ancestors, $k$ neutral arrows emerge and hit $k$ of the remaining $n-1$ lines of potential ancestors. In that case, the $k$ lines coalesce with the reproducing line, reducing the number of potential ancestors by $k.$ According to the construction of the ancestral selection graph, this happens at rate $\frac{n}{N} \int_{\Delta} \binom{n-1}{k} y^k (1-y)^{n-1-k}\Lambda(dy,dz),$ leading to the first line in the above transition rates. There might be selective arrows at the same event, but these don't change the number of potential ancestors, since in that case both lines continue to be potential ancestors.
\item[2.] From one of the $N-n$ lines that don't currently belong to the set of potential ancestors there are neutral arrows to $k\geq 2$ out of the $n$ current potential ancestors.  In that case those $k$ lines merge, but the line of the reproducing individual is added to the set of potential ancestors.  This happens at rate $(1-\frac{n}{N})\int_{\Delta} \binom{n}{k} y^k (1-y)^{n-k}\Lambda(dy,dz).$
\item[3.] From one of the $N-n$ lines outside the current set of potential ancestors there is a selective arrow to at least one of the $n$ individuals from the current set of ancestors, while at the same time there are no neutral arrows to any of the $n$ current ancestors in the sample. In that case, the reproducing line is added to the set of potential ancestors, while all the other lines  remain. This happens at rate $(1-\frac{n}{N})\int_{\Delta} [(1-y)^{n}-(1-y-z)^{n}]\Lambda(dy,dz),$ since $(1-y)^{n}-(1-y-z)^{n}$  is the probability that there are no neutral arrows to individuals of the ancestal process, while there is at least one selective arrow. 
\end{enumerate}
There are no further events in the graphical construction that could change the number of potential ancestors. 
\end{proof}

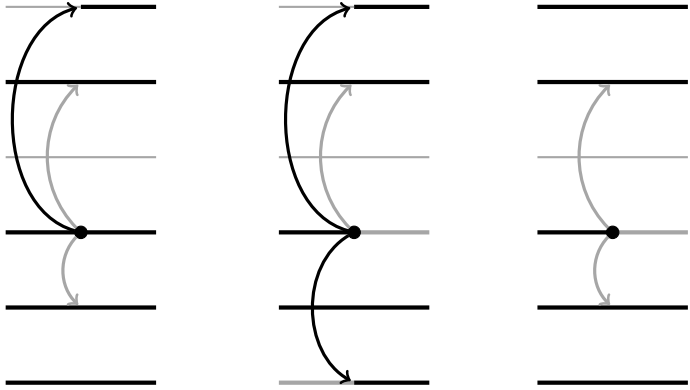
\begin{figure}[t]
        \centering
   { {\begin{minipage}[t]{100pt}
    \scalebox{1}{
        \begin{tikzpicture} %[darkstyle/.style={inner sep=0pt}]
        
        %Lines
        \draw[-,ultra thick, color=black](0,0)--(2,0);
         \draw[-, ultra thick, color=black](0,1)--(2,1);
         \draw[-, ultra thick, color=black](0,2)--(2,2);
        \draw[-,thick, color=gray!70!white](0,3)--(2,3);
          \draw[-,ultra thick, color=black](0,4)--(2,4);
         \draw[-, ultra thick, color=black](1,5)--(2,5);
        \draw[-,thick, color=gray!70!white](0,5)--(1,5);

       %Dots/Individuals
        \node (A1) at (1,0) [circle,scale=0.3]{} ;
        \node (A2) at (1,1) [circle,scale=0.3]{} ;
        \node (A3) at (1,2) [circle, scale=0.5, draw, fill=black]{} ;
        \node (A4) at (1,3) [circle, scale=0.3]{} ;
        \node (A5) at (1,4) [circle, scale=0.3]{} ;
        \node (A6) at (1,5) [circle,scale=0.3]{} ;

        %Arrows
         \draw[->, very thick] (A3) to[bend left=75] (A6);
         \draw[->, very thick, color=gray!70!white] (A3) to[bend left=45] (A5);
         \draw[->, very thick, color=gray!70!white] (A3) to[bend right=45] (A2);
         % \draw[->, very thick, color=gray!70!white] (A3) to[bend right=60] (A1);

        \end{tikzpicture}
        }   
         \end{minipage}}}
  {\begin{minipage}[t]{100pt}
   \scalebox{1}{
        \begin{tikzpicture} %[darkstyle/.style={inner sep=0pt}]
        
         %Lines
        \draw[-,ultra thick, color=black](1,0)--(2,0);
        \draw[-,ultra thick, color=gray!70!white](0,0)--(1,0);
         \draw[-, ultra thick, color=black](0,1)--(2,1);
         \draw[-, ultra thick, color=black](0,2)--(1,2);
           \draw[-, ultra thick, color=gray!70!white](1,2)--(2,2);
        \draw[-,thick, color=gray!70!white](0,3)--(2,3);
          \draw[-,ultra thick, color=black](0,4)--(2,4);
         \draw[-, ultra thick, color=black](1,5)--(2,5);
        \draw[-,thick, color=gray!70!white](0,5)--(1,5);

       %Dots/Individuals
        \node (A1) at (1,0) [circle,scale=0.3]{} ;
        \node (A2) at (1,1) [circle,scale=0.3]{} ;
        \node (A3) at (1,2) [circle, scale=0.5, draw, fill=black]{} ;
        \node (A4) at (1,3) [circle, scale=0.3]{} ;
        \node (A5) at (1,4) [circle, scale=0.3]{} ;
        \node (A6) at (1,5) [circle,scale=0.3]{} ;

        %Arrows
         \draw[->, very thick] (A3) to[bend left=75] (A6);
         \draw[->, very thick, color=gray!70!white] (A3) to[bend left=45] (A5);
        % \draw[->, very thick, color=gray!70!white] (A3) to[bend right=45] (A2);
        \draw[->, very thick, color=black] (A3) to[bend right=60] (A1);

        \end{tikzpicture}
        }   
  
    \end{minipage}}
    {\begin{minipage}[t]{100pt}
   \scalebox{1}{
        \begin{tikzpicture} %[darkstyle/.style={inner sep=0pt}]
        
         %Lines
        \draw[-,ultra thick, color=black](0,0)--(2,0);
       % \draw[-,ultra thick, color=gray!70!white](0,0)--(1,0);
         \draw[-, ultra thick, color=black](0,1)--(2,1);
         \draw[-, ultra thick, color=black](0,2)--(1,2);
           \draw[-, ultra thick, color=gray!70!white](1,2)--(2,2);
        \draw[-,thick, color=gray!70!white](0,3)--(2,3);
          \draw[-,ultra thick, color=black](0,4)--(2,4);
         \draw[-, ultra thick, color=black](0,5)--(2,5);
        %\draw[-,thick, color=gray!70!white](0,5)--(1,5);

       %Dots/Individuals
        \node (A1) at (1,0) [circle,scale=0.3]{} ;
        \node (A2) at (1,1) [circle,scale=0.3]{} ;
        \node (A3) at (1,2) [circle, scale=0.5, draw, fill=black]{} ;
        \node (A4) at (1,3) [circle, scale=0.3]{} ;
        \node (A5) at (1,4) [circle, scale=0.3]{} ;
        \node (A6) at (1,5) [circle,scale=0.3]{} ;

        %Arrows
         %\draw[->, very thick] (A3) to[bend left=75] (A6);
         \draw[->, very thick, color=gray!70!white] (A3) to[bend left=45] (A5);
         \draw[->, very thick, color=gray!70!white] (A3) to[bend right=45] (A2);
       % \draw[->, very thick, color=black] (A3) to[bend right=60] (A1);

        \end{tikzpicture}
        }   
    \end{minipage}}
      \caption{The three possible types of transitions of the ancestral process.  On the left, the reproducing individuals belongs to the current sample(thick black lines), there is one neutral (black) and two selctive (grey) arrows. The line of the individual at the tip of the neutral arrow is discarded, resp. merges with the reproducing line, while the lines hit by a selective arrow remain, as well as those lines not affected by the reproduction event. In the middle, the reproducing individuals doesn't belong to the current sample, there are two neutral and one selctive arrows. The line of the reproducing individual is thus added, the lines of the individuals at the end of a neutral arrow are discarded, resp. merge with the reproducing line. On the right, only selective arrows are present, where the incoming lines are kept, but the reproducing line is added. We therefore see a branching.}\label{fig:ancestral_trans}
\end{figure}

For $x\in S_N$, $n\in[N]$, and $t\geq 0$ we define the \emph{sampling function} $S_t(x,n)$ as follows. Let $\{u_i\}_{i\in[n]}$ be a uniformly sampled subset of $[N]$ of size $n$, and let $S_t(x,n)$ be the probability that all individuals in the sample $S_t=\{(u_i,t)\}$ are of type $\ominus$ conditional on the initial frequency of type $\ominus$ individuals at time $0$ is $x$. Note that for $t=0$
\begin{equation}
S_0(x,n)=\prod_{i=1}^n\frac{Nx+1-i}{N+1-i}=\frac{\binom{Nx}{n}}{\binom{N}{n}}.
\end{equation}
Observe that for fixed $n$ and large $N$ is of order $x^n+O(1/N),$ a fact that we will use in Proposition \ref{moment_duality} below.

Now, using the graphical representation, for any $t>0$ we can write $S_t(x,n)$ in terms of $S_0(x,n)$ and  $A^{N}_t$ started at $A_0^N=n$. Recall that an individual at time $t$ has type $\ominus$ if and only if all its potential ancestors at time $0$ have type $\ominus$. Thus the probability that $n$ individuals sampled at time $t$ is given in terms of the number of potential ancestors of these individuals, and we obtain
\begin{equation*}
	S_t(x,n)=\E[S_0(x,A^{N}_t)\,|\, A_0^N=n]=\E_n[S_0(x,A^{N}_t)].
\end{equation*}
Here, $\E$ denotes the expectation with respect to the law of the ancestral selection graph,  and $\E_n$ the expectation with respect to the Markov chain $(A_t^N)_{t\geq 0}$ started at $n.$ 
Similarly, we can write $S_t(x,n)$ in terms of $S_0(x,n)$ and  $X^{N}_t$ started at $X_0^N=x$ as 
\begin{equation*}
	S_t(x,n)=\E[S_0(X^{N}_t,n)\,|\, X_0=x]=\E_x[S_0(X^{N}_t,n)].
\end{equation*}
Hence we have shown

\begin{cor}\label{cor:duality} The processes $(X_t^N)_{t\geq 0}$ and $(A_t^N)_{t\geq 0}$ are dual with respect to the duality function $S_0,$ that is,
\[\E_x[S_0(X^{N}_t,n)]=\E_n[S_0(x,A^{N}_t)] \quad \forall t\geq 0, x\in S_N, n\in \N.\]
\end{cor}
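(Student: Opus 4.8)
The plan is to observe that the corollary is the immediate consequence of the two representations of the sampling function $S_t(x,n)$ established directly above its statement: both the forward expression $\E_x[S_0(X^{N}_t,n)]$ and the backward expression $\E_n[S_0(x,A^{N}_t)]$ compute one and the same quantity, namely the probability that a uniformly chosen sample of $n$ individuals at time $t$ is entirely of type $\ominus$ when the initial frequency of type $\ominus$ individuals equals $x$. Since both equal $S_t(x,n)$, equating them yields the duality. The substantive work lies in verifying the two representations, and both are read off the \emph{single} random object $M^N$, which is precisely what makes the duality pathwise rather than merely in law.

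For the forward representation I would run the frequency process $X^{N}$ forward from $X^{N}_0=x$. By Proposition \ref{prop:frequency_equal} together with Proposition \ref{gen_freq_ASG}, this process, constructed from the ASG by propagating type $\oplus$ through all arrows and type $\ominus$ through neutral arrows only, agrees with the $\Lambda$-asymmetric Moran frequency process. Conditioning on $X^{N}_t$ and using exchangeability of the construction, the probability that a uniform sample of size $n$ taken at time $t$ consists only of type $\ominus$ individuals is exactly $S_0(X^{N}_t,n)$, by the very definition of $S_0$ as a hypergeometric sampling probability. Averaging over the law of $X^{N}_t$ gives $S_t(x,n)=\E_x[S_0(X^{N}_t,n)]$.

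For the backward representation I would fix the sample $S_T$ of size $n$ and trace its ancestral lines back to time $0$ along the same $M^N$. By the type-propagation rule, an individual at time $t$ is of type $\ominus$ if and only if \emph{all} of its potential ancestors at time $0$ are of type $\ominus$; hence the whole sample is of type $\ominus$ precisely when every potential ancestor at time $0$ is. The number of such ancestors is $A^{N}_t$ with $A^{N}_0=n$. Conditioning on the ancestral line counting process and again invoking exchangeability, conditionally on $A^{N}_t$ the set of potential ancestors is a uniform subset of $[N]$ of that cardinality, so the conditional probability that all ancestors carry type $\ominus$ under an initial $\ominus$-frequency $x$ is $S_0(x,A^{N}_t)$. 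Averaging gives $S_t(x,n)=\E_n[S_0(x,A^{N}_t)]$.

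The main obstacle I anticipate is the careful use of exchangeability in the backward step: one must argue that, conditionally on its cardinality $A^{N}_t$, the identity of the set of potential ancestors is uniformly distributed among size-$A^{N}_t$ subsets of $[N]$ and independent of the time-$0$ type assignment, so that $S_0(x,A^{N}_t)$ is genuinely the correct conditional sampling probability. This is where the exchangeability of the Poisson construction, namely the uniform choice of the reproducing individual together with the i.i.d.\ uniform marks $u_1,\dots,u_N$, is essential. Once this is in hand, both representations equal $S_t(x,n)$ by construction, and since they are computed from the same realisation of $M^N$, the identity $\E_x[S_0(X^{N}_t,n)]=\E_n[S_0(x,A^{N}_t)]$ holds pathwise and in particular in expectation, as claimed.
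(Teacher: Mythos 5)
Your proposal is correct and takes essentially the same route as the paper: the corollary follows by equating the forward representation $S_t(x,n)=\E_x[S_0(X^{N}_t,n)]$ and the backward representation $S_t(x,n)=\E_n[S_0(x,A^{N}_t)]$, both read off the same realisation of $M^N$. Your explicit justification of the backward step via exchangeability (that, conditional on its cardinality $A^{N}_t$, the set of potential ancestors is a uniform subset of $[N]$ independent of the time-$0$ type assignment, so the hypergeometric probability $S_0(x,A^{N}_t)$ applies) merely spells out a point the paper leaves implicit.
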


Duality with respect to the sampling function $S_0$ is a classical concept which goes back to Cannings and Gladstien \cite{Cannings, Gladstien1, Gladstien2, Gladstien3} and was studied extensively by M\"ohle \cite{Moehle}. The same type of duality in a similar model appears in \cite{BEH}.

\section{Generalisations of the model and some remarks on the coupling}\label{sect:generalisations}

In this section we show how the construction can be extended to finite measures, which also extends the interpretation of selective advantage encoded in the stochastic order.  Further, we provide some comments concerning the question of uniqueness of the coupling. 

\medskip

\paragraph{Extension to finite measures} Assume now that $\Lambda^{\oplus}$ and $\Lambda^{\ominus}$ are finite measures on $[0,1]$ satisfying 
\begin{align}\label{eq:order}
\Lambda^{\ominus}[x,1]\leq \Lambda^{\oplus}[x,1]
\end{align}
for all $x\in[0,1].$ We thus write again by analogy to the stochastic order $\Lambda^{\ominus}\leq \Lambda^{\oplus}.$ Note that also in this case this is a  partial order on the set of finite measures.  Write $\|\Lambda^{\oplus}\|$ and $\|\Lambda^{\ominus}\|$ for their respective total masses. Now define
\begin{equation}\mu^{\oplus}:=\frac{\Lambda^{\oplus}}{\|\Lambda^{\oplus}\|}\qquad \text{and} \qquad \mu^{\ominus}:=\frac{\Lambda^{\ominus}}{\|\Lambda^{\oplus}\|}+c\delta_0,\end{equation}
where $c=1-\frac{\|\Lambda^{\ominus}\|}{\|\Lambda^{\oplus}\|}$ and $\delta_0$ denotes the Dirac measure at 0. Then clearly $\mu^{\ominus}$ and $\mu^{\oplus}$ are probability measures, and we have $\mu^{\ominus}\leq \mu^{\oplus}$ in the stochastic order if and only if $\Lambda^{\ominus}\leq \Lambda^{\oplus}$ in the sense of \eqref{eq:order}.  

We can define the $\Lambda$-asymmetric Moran model for finite measures $\Lambda^{\ominus},\Lambda^{\oplus}$ exactly as in Definition \ref{def:Lambda-Moran}. The rates of the frequency process then become 

\[
x\mapsto \begin{cases} x+\frac{k}{N}&  \text{ at rate } x\|\Lambda^{\oplus}\|\int_0^1 \binom{(1-x)N}{k} y^k (1-y)^{(1-x)N-k}\mu^{\ominus}(dy), \quad k=1,...,(1-x)N \\
x-\frac{k}{N}& \text{ at rate } (1-x) \|\Lambda^{\oplus}\|\int_0^1 \binom{xN}{k}y^k(1-y)^{xN-k}\mu^{\oplus}(dy),\quad k=1,...,xN ,
\end{cases}\] 
since an atom at 0 doesn't give a contribution.  Thus the $\Lambda$-Moran model with finite measures $\Lambda^{\oplus},\Lambda^{\ominus}$ is a simple linear time change of the Model with probability measures $\mu^{\oplus},\mu^{\ominus}$ according to the above transformation.  Reproductive events now happen at rate $\|\Lambda^{\oplus}\|.$ The ancestral selection graph can be constructed by using the selective coupling $\mu$ of $\mu^{\ominus},\mu^{\oplus}$ and then running the Poisson point process of Definition \ref{def:ancestral_selection_graph} with the coupling measure $\|\Lambda^{\oplus}\|\cdot \mu$ in the place of $\Lambda$. Hence all results in this paper also hold for finite measures provided they are ordered in the above sense. One should simply be slightly careful when applying the identity \eqref{eq:decomp-int}, since whenever $c\neq 0, $ that is, $\|\Lambda^{\ominus}\|<\|\Lambda^{\oplus}\|,$ this only holds for functions $f:[0,1]\to \R$ such that $f(0)=0.$ The measure $\Lambda$ via \eqref{eq:decomp-int} 
determines $\Lambda^{\oplus}$ and $\Lambda^{\ominus}$ only up to possible atoms at 0.
\medskip

\begin{remark}\label{rem:cases_of_reproduction} When considering finite measures, we see in particular that $\Lambda^{\ominus}\leq \Lambda^{\oplus}$ in the following two prototypical cases of selective mechanisms:
\begin{enumerate}
	\item(Faster reproduction) if $\Lambda^{\oplus}=(1+\alpha)\Lambda^{\ominus}$ for some $\alpha>0$. 
	\item(Bigger reproductive events) There exists a function $s:[0,1]\mapsto [0,1]$ such that $s(x)-x\geq 0$ and $\Lambda^{\ominus}(s(A))=\Lambda^{\oplus}(A)$.
\end{enumerate}
In the first case, in particular $\|\Lambda^{\oplus}\|=(1+\alpha)\|\Lambda^{\ominus}\|,$ hence we are in the situation of the classical ancestral selection graph by Krone and Neuhauser \cite{KN, NK} where the type with a selective advantage reproduces faster than the other. The second case is satisfied for example if $\Lambda^{\ominus}=\delta_a$ and $\Lambda^{\oplus}=\delta_b,$ with $0\leq a\leq b\leq 1$ where  $\Lambda^{\ominus}$ generates reproductive events of size $a$,  and $\Lambda^{\oplus}$ of size $b.$ 
\end{remark}
\medskip

\paragraph{Non-uniqueness of the coupling} We now turn to a discussion of the coupling from Lemma \ref{descom_measure_new}. A natural question to ask is whether the coupling $\Lambda$ we use in our construction of the ancestral selection graph is unique.  As we saw, there is always some freedom in adding atoms at 0. However, also without atoms at 0 the coupling is not unique, as can be seen from a simple example.

\begin{example}\label{ex:non-uniqueness} Consider
$\Lambda^{\ominus}(dx)=\frac{1}{2}(\delta_{1/4}(dx)+\delta_{1/2}(dx))$ and  $\Lambda^{\oplus}(dx)=\frac{1}{3}(\delta_{1/2}(dx)+\delta_{3/4}(dx)+\delta_1(dx)).$
Clearly these are probability measures on $[0,1]$ satisfying $\Lambda^{\ominus}\leq \Lambda^{\oplus}$.  Let
\begin{align*}
\Lambda_1(dy,dz)=\frac{1}{6}\sum_{i=1}^2\sum_{j=2}^4\delta_{(i/4,(j-i)/4)}(dy,dz),
\end{align*}
and
\begin{align*}
\Lambda_2(dy,dz)&=\frac{1}{4}\delta_{(1/4,1/4)}(dy,dz)+\frac{1}{12}\delta_{(1/2,0)}(dy,dz)+\frac{1}{8}(\delta_{(1/4,1/2)}(dy,dz)+\delta_{(1/4,3/4)}(dy,dz))\notag\\&+\frac{5}{24}(\delta_{(1/2,1/4)}(dy,dz)+\delta_{(1/2,1/2)}(dy,dz)).
\end{align*}

It is straightforward to check that $\Lambda_1$ and $\Lambda_2$ are two couplings of $\Lambda^{\ominus}$ and $\Lambda^{\oplus}$ in the sense of Lemma \ref{descom_measure_new}. 
%For the convenience of the reader we provide the calculations in \ref{appendix_3}.
\end{example}

\medskip

In the ancestral selection graph, we can use any coupling $\Lambda$ satisfying the conditions of Lemma \ref{descom_measure_new}. By \eqref{eq:decomp-int} this always leads to the same frequency process, and at least as long as $\|\Lambda^\oplus\|=\|\Lambda^\ominus\|$, also the rates of the ancestral line counting process remain the same.  The choice of the specific coupling is therefore not important on the level of these processes. However, the fact that at least one selective coupling exists is of course crucial in the construction of the ASG, and therefore pathwise construction of the duality.  It is also important in Section \ref{sect:Griffiths}.

In spite of this non-uniqueness, Strassen's theorem as used in the proof of Lemma \ref{descom_measure_new} provides us with a characterisation of the coupling $\rho$ introduced after Lemma \ref{descom_measure_new}. In fact, as we will argue, it minimizes the variance of the number of potential ancestors in the construction of the $\Lambda$-asymmetric ancestral graph given in Definition \ref{def:ancestral_selection_graph} in the case in which $\Lambda^\ominus$ and $\Lambda^\oplus$ are  both probability measures. To be more precise, denote for $n\in\mathbb{N}$.
	\[
	c(y,z):=(z-y)^2, \qquad y,z\in[0,1]^2,
	\]
	and consider the following optimal transport problem, consisting  in finding a probability measure $\gamma^*$ on $[0,1]^2$ such that the following infimum is achieved
	\begin{align}\label{opt_prob}
		V(n,\Lambda^\ominus,\Lambda^\oplus):=\inf\left\{\int_{[0,1]^2}c(y,z)\gamma(dy,dz): \gamma\in \Gamma(\Lambda^\ominus,\Lambda^\oplus)\right\},
	\end{align}
	where $\Gamma(\Lambda^\ominus,\Lambda^\oplus)$ is the set of probability measures on $[0,1]^2$ with marginals $\Lambda^\ominus$, $\Lambda^\oplus$ on $[0,1]$. We first note that by construction
	\begin{align}\label{opt_tran_2}
		\int_{\Delta}z^2\Lambda(dy,dz)&=\int_{[0,1]^2}\left(z-y\right)^2\rho(dy,dz)=\E\left[c(Z^\ominus,Z^\oplus)\right],
	\end{align}
	where $Z^\ominus,Z^\oplus$, given in the proof of Lemma \ref{descom_measure_new}, satisfy that $(Z^\ominus,Z^\oplus)=(F_\ominus^{-1}(U),F_\oplus^{-1}(U))$.
	
	On the other hand, the function $c$ is symmetric and satisfies the `Monge" conditions, i.e. $c$ is continuous and satisfies that for $y'\geq y$ and $z'\geq z$
	\begin{align*}
		c(y',z')-c(y,z')-c(y',z)+c(y,z)\leq 0.
	\end{align*}
	Hence, as in Section 7.1 in \cite{RK} we have that the solution to the optimization problem given in \eqref{opt_prob} is given by a measure $\gamma^*$ with distribution $F^*(y,z):=\min\{F_\ominus(y),F_\oplus(z)\}$ for $y,z\in[0,1]^2$, or in terms of random variables, $\gamma^*$ is the distribution function of the random vector $(F_\ominus^{-1}(U),F_\oplus^{-1}(U))$, 
	where $U$ a uniform random variable. %In other words
Comparing with \eqref{opt_tran_2} we see that $\gamma^*=\rho$ and therefore the measure $\rho$ from our coupling is the solution to the minimisation problem \eqref{opt_prob} i.e.
	\[
	V(n,\mu_1,\mu_2)=\int_{[0,1]^2}\left(z-y\right)^2\rho(dy,dz),
	\]
	for each $n\in\mathbb{N}$. 
To arrive at the interpretation in terms of selective arrows, first observe that by definition of the ancestral selection graph, the number of selective arrows for any coupling $\Lambda$ is given by
	\[\int_{\Delta}z\Lambda(dy,dz)=\int_{[0,1]^2}\left(z-y\right)\rho(dy,dz), \]
	which is a constant, since by \eqref{eq:decomp-int}
	\[\int_{\Delta}z\Lambda(dy,dz)=\int_{[0,1]}(y+z)\Lambda^{\oplus}(dz)-\int_{[0,1]}y\Lambda^{\ominus}(dy):=C.\]
Therefore the expected number of selective arrows associated with any coupling in the class $\Gamma(\Lambda^{\ominus},\Lambda^{\oplus})$ is constant and equal to $C$. On the other hand,  the variance of the number of selective arrows is given by 
\[\int_{\Delta}z^2\Lambda(dy,dz)-C^2=\int_{[0,1]^2}\left(z-y\right)^2\rho(dy,dz)-C^2.\]
Hence minimising $\int_{[0,1]^2}\left(z-y\right)^2\gamma(dy,dz)$ in the class $\Gamma(\Lambda^{\ominus},\Lambda^{\oplus})$ also minimises this variance.

\section{Scaling Limits}\label{scaling_limits}
In this section we will study the scaling limit of the frequency process associated to the $\Lambda$-asymmetric ancestral selection graph introduced in Section \ref{sect:adaptation-selection}. We start by showing that the limiting object is well defined, which is the content of the next result.  We defer the proof to \ref{appendix_1}.
\begin{prop}\label{sde_exis}
	Assume that $\Lambda$ is a measure on $\Delta=\{(y,z)\in[0,1]^2:y+z\in[0,1]\}$ such that
	\begin{align}\label{int}
		\int_{\Delta}(y^2+z)\Lambda(dy,dz)<\infty.
	\end{align}
	Then, for any $ x\in[0,1]$ there exists a unique strong solution $Y=(Y_t)_{t\geq0}$ to the following stochastic differential equation
	\begin{align}\label{sde_n_2}
		Y_t&=x+\int_0^t\int_0^1\int_{\Delta}\left(y(1-Y_{s-})1_{\{u\leq Y_{s-}\}}-(y+z)Y_{s-}1_{\{u\geq Y_{s-}\}}\right)1_{\{Y_{s-}\in[0,1]\}}\tilde{N}(ds,du,dy,dz)\notag\\
		&-\int_{\Delta}z \Lambda(dy,dz)\int_0^t(1-Y_{s-})Y_{s-}1_{\{Y_{s-}\in[0,1]\}}ds,
	\end{align} 
	where $ N$ is a Poisson random measure on $(0,\infty)\times[0,1]\times\Delta$ with intensity measure $dt\times du\times\Lambda(dy,dz)$ and $\tilde{N}(ds,du,dy,dz):=N(ds,du,dy,dz)-dsdu\Lambda(dy,dz)$ denotes the compensated random measure associated to $N$. We refer to the process $Y$ as the \emph{$\Lambda$-asymmetric frequency process}.
\end{prop}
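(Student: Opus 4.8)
The plan is to prove pathwise uniqueness and weak existence for \eqref{sde_n_2} separately, and then to conclude the existence of a unique strong solution by the jump-type version of the Yamada--Watanabe theorem. Before either step I would rewrite the equation in a form adapted to the two integrability scales in \eqref{int}. Integrating the integrand over $u\in[0,1]$ shows that the neutral contribution $y(1-Y_{s-})\mathbf 1_{\{u\le Y_{s-}\}}-yY_{s-}\mathbf 1_{\{u\ge Y_{s-}\}}$ has vanishing $u$-average, so it already defines a locally square-integrable martingale once $\int_\Delta y^2\,\Lambda(dy,dz)<\infty$, whereas the $u$-compensator of the selective contribution $-zY_{s-}\mathbf 1_{\{u\ge Y_{s-}\}}$ equals exactly the explicit drift $-\int_\Delta z\,\Lambda(dy,dz)\,(1-Y_{s-})Y_{s-}$. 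Hence \eqref{sde_n_2} can equivalently be written with the neutral part driven by $\tilde N$ (an $L^2$ martingale, finite by $\int y^2\,\Lambda<\infty$) and the selective part driven by the uncompensated $N$ (a finite-$L^1$ pure-jump term, finite by $\int z\,\Lambda<\infty$). I would also record that $x\in[0,1]$ together with $(y,z)\in\Delta$ forces every jump to be a convex move toward $0$ or toward $1$, so any solution remains in $[0,1]$ and the cut-off $\mathbf 1_{\{Y_{s-}\in[0,1]\}}$ is inactive.

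For pathwise uniqueness, let $Y,Y'$ be two solutions driven by the same $N$ with $Y_0=Y_0'=x$, and set $D=Y-Y'$. The essential mechanism is that the discontinuous coefficients become Lipschitz in an averaged sense once integrated against $du$: for the selective part one gets $\int_0^1\bigl|zY\mathbf 1_{\{u\ge Y\}}-zY'\mathbf 1_{\{u\ge Y'\}}\bigr|\,du\le Cz|D|$, which is $\Lambda$-integrable and yields a direct $L^1$/Gronwall control. The neutral part is more delicate, because $\int_\Delta y\,\Lambda$ may be infinite; here I would use the identity $\int_0^1(\mathbf 1_{\{u\le Y\}}-\mathbf 1_{\{u\le Y'\}})^2\,du=|D|$, giving $\int_0^1(\text{neutral difference})^2\,du\le Cy^2|D|$. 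This is exactly a Hölder-$1/2$ (square-root) control of the martingale's quadratic variation in terms of $|D|$, so uniqueness follows from a Yamada--Watanabe argument: apply It\^o's formula to $\phi_n(D)$ for smooth convex even $\phi_n\uparrow|\cdot|$ with $x\phi_n''(x)\le 2/n$, bound the second-order jump terms using $\int y^2\,\Lambda<\infty$ and the $Cy^2|D|$ estimate, let $n\to\infty$, and close with Gronwall to obtain $\E|D_t|=0$.

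For weak existence I would use a truncation scheme, restricting $\Lambda$ to $\{y\ge\varepsilon\}$ so that the approximating equation has finitely many jumps on compacts and is solved pathwise by interlacing the jumps with the deterministic drift. The uniform second-moment bound coming from $\int y^2\,\Lambda<\infty$, together with confinement to $[0,1]$, gives tightness of $(Y^\varepsilon)$ in Skorokhod space via Aldous' criterion, and any limit point is identified as a solution of \eqref{sde_n_2} through the associated martingale problem; combining this with pathwise uniqueness produces the unique strong solution. The main obstacle throughout is precisely the non-Lipschitz indicator coefficients compounded by the possible divergence of $\int_\Delta y\,\Lambda$: this rules out a naive Picard or Lipschitz--Gronwall scheme and forces the neutral jumps to be kept in compensated ($L^2$) form and handled by the Hölder-$1/2$ Yamada--Watanabe mechanism, while only the selective jumps, finite by $\int z\,\Lambda<\infty$, can be treated by a direct $L^1$ estimate.
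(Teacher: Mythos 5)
Your overall plan --- pathwise uniqueness plus weak existence, glued together by the jump-type Yamada--Watanabe theorem --- is exactly the machinery that sits behind the paper's proof, and your two moduli of continuity are the paper's own estimates: your bound $\int_0^1(\text{neutral difference})^2\,du\le Cy^2|D|$ is contained in \eqref{lips_g}--\eqref{eu_3}, and your $L^1$ bound $Cz|D|$ for the selective part plays the role of the drift estimate \eqref{eu_2}. The difference in route is that the paper does not re-prove this machinery: it verifies invariance of $[0,1]$, the square-root $L^2$ modulus, linear growth, \emph{and the monotonicity of the map $x\mapsto x+g(x,y,z,u)$}, and then cites the jump-SDE results of Fu and Li \cite{FL} and Li and Pu \cite{LP}. (A secondary, easily repaired flaw: truncating $\Lambda$ to $\{y\ge\varepsilon\}$ in your existence step permanently discards selective jumps carried by $\{y=0,\,z>0\}$, which never return as $\varepsilon\to0$; one should truncate on $\{y^2+z\ge\varepsilon\}$ instead, which still has finite mass by \eqref{int}.)

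The italicized hypothesis above is where your proposal has a genuine gap: the uniqueness argument as you describe it would fail. For Brownian SDEs the H\"older-$\tfrac12$ modulus alone suffices, but for jump-driven equations it does not (pathwise uniqueness can genuinely fail for stable-driven SDEs with H\"older coefficients). The problem is the second-order jump term in It\^o's formula, $\phi_n(D_{s-}+h)-\phi_n(D_{s-})-\phi_n'(D_{s-})h=h^2\int_0^1(1-\theta)\phi_n''(D_{s-}+\theta h)\,d\theta$, where $h$ is the difference of the jump increments of the two solutions. Since the Yamada--Watanabe functions satisfy only $\phi_n''(v)\le 2/(n|v|)$, this term is controlled by $2h^2/(n|D_{s-}|)$ --- which, combined with your $Cy^2|D_{s-}|$ estimate, gives the needed $C/n\to0$ --- \emph{only if} $D_{s-}$ and $D_{s-}+h$ lie on the same side of zero, because then $|D_{s-}+\theta h|=|(1-\theta)D_{s-}+\theta(D_{s-}+h)|\ge(1-\theta)|D_{s-}|$. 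If a jump can carry $D$ across zero, $\phi_n''$ along the segment is only bounded by its global supremum, which blows up as $n\to\infty$, and no bound in terms of $h^2$ and $|D_{s-}|$ is available. What rescues the argument here is a structural fact you never state or use: for each fixed $(y,z,u)$ the map $x\mapsto x+g(x,y,z,u)$ is non-decreasing on $[0,1]$ (it equals $x(1-y-z)$ for $x\le u$ and $y+x(1-y)$ for $x>u$, with an upward jump of size $y+uz\ge0$ at the switch point), so the difference of two solutions driven by the same Poisson measure never changes sign at a jump. This order-preservation property is precisely the extra hypothesis, beyond the square-root modulus, required by \cite{FL,LP}, and it is verified explicitly in the paper's proof; your argument goes through once it is stated and used, but without it the key bound on the second-order jump terms is simply false.
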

\begin{rem}
	Note that if the stronger condition 
	%\begin{equation*}%\label{cond_1}
		$\int_{\Delta}(y+z)\Lambda(dy,dz)<\infty$,
	%\end{equation*}
	holds, then the process $Y$, solution to the SDE given in \eqref{sde_n_2}, can be described as the solution to the simpler SDE given by
	\begin{align*}%\label{sde_n_1}
		Y_t=x+\int_0^t\int_{\Delta}\int_0^1\left(y(1-Y_{s-})1_{\{u\leq Y_{s-}\}}-(y+z)Y_{s-}1_{\{u\geq Y_{s-}\}}\right)1_{\{Y_{s-}\in[0,1]\}}N(ds,du,dy,dz).
	\end{align*}
\end{rem}

In the following result we derive the convergence to the process $Y$, of the frequency processes associated to the $\Lambda$-ancestral selection graph introduced in Section \ref{sect:adaptation-selection}.  In doing so we need to take some care in dealing with the possible singularity at 0 of the measure $\Lambda$. To this end, we will apply a truncation procedure at 0, and study a sequence of frequency processes $Y^N$ that correspond to the truncated $\Lambda$-measures. 
\medskip

Fix a measure $\Lambda$ on $\Delta$ such that $\int_{\Delta}(y^2+z)\Lambda(dy,dz)<\infty$, an initial frequency $x\in[0,1]$ and a number $\alpha\in(0,1/2)$.
% and $\mu^{(i)}\in \mathcal{M}(\Delta)$. 
For each $N\in\mathbb{N}$ we define the truncated frequency process $Y^N=(Y_t^N)_{t\geq 0}$ with initial value  $Y^N_0=\lfloor xN \rfloor/N$ as the frequency process of an asymmetric $\Lambda$-Moran model with reproduction mechanism 
$\Lambda^{(N)}(A):=\int_{\Delta^N}1_A(y,z)\Lambda(dy,dz)$, where 
$\Delta^N:=\{(y,z)\in\Delta:y^2>1/N^{\alpha}\}$.

Note that the measure $\Lambda^{(N)}$ is a finite measure on $\Delta$, indeed by the definition of $\Delta^N$ we have
\[
\Lambda^{(N)}(\Delta)=\Lambda(\Delta^N)\leq N^{\alpha}\int_{\Delta}(y^2+z)\Lambda(dy,dz)<\infty.
\]
Hence $Y^N$ is well defined. We will now show the convergence of the sequence of process $(Y^N)_{N\in\mathbb{N}}$ as the total size of the population $N$ grows to infinity to the  $\Lambda$-asymmetric frequency process $Y$ defined in Proposition \ref{sde_exis}.  The proof is deferred to \ref{appendix_2}.
\begin{prop}\label{asy_a}
	The sequence $(Y^N)_{N\in\mathbb{N}}$ converges weakly in $\mathbb{D}([0,T],[0,1])$ to a limit $Y$, where $Y$ is the $\Lambda$-asymmetric frequency process given in \eqref{sde_n_2}.
\end{prop}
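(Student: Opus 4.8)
The plan is to prove the convergence by combining generator (martingale problem) convergence with tightness, using the well-posedness of the limit supplied by Proposition \ref{sde_exis}. Applying It\^o's formula to the solution $Y$ of \eqref{sde_n_2}, and using the compensator to cancel the $f'$ contributions exactly as in the proof of Proposition \ref{gen_freq_ASG}, shows that $Y$ solves the martingale problem for the operator
\[
\mathcal{A}f(x)=\int_{\Delta}\Big[x\big(f(x+y(1-x))-f(x)\big)+(1-x)\big(f(x-(y+z)x)-f(x)\big)\Big]\Lambda(dy,dz),\qquad f\in C^2([0,1]).
\]
This operator is well defined on $C^2$: the linear-in-$y$ parts of the two bracketed differences combine to $f'(x)\,x(1-x)\big(y-(y+z)\big)=-f'(x)\,x(1-x)z$, so the integrand is $O(z)+O(y^2)$ and is integrable by the standing assumption $\int_{\Delta}(y^2+z)\Lambda<\infty$. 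Since \eqref{sde_n_2} has a unique strong solution, the $\mathcal{A}$-martingale problem is well posed, and it suffices to show that every weak limit point of $(Y^N)$ solves it.

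First I would establish generator convergence. The process $Y^N$ is the frequency chain of the $\Lambda$-asymmetric Moran model with the finite reproduction measure $\Lambda^{(N)}=\Lambda|_{\Delta^N}$, so by Proposition \ref{gen_freq_ASG} its generator $\mathcal{B}^{(N)}$ is \eqref{gen_ASG_1} with $\Lambda$ replaced by $\Lambda^{(N)}$. The aim is $\sup_{x\in S_N}|\mathcal{B}^{(N)}f(x)-\mathcal{A}f(x)|\to 0$ for $f\in C^3([0,1])$, a core for $\mathcal{A}$. Two effects must be controlled. The \emph{discreteness}: replacing the continuous jump $y(1-x)$ by $\tfrac1N\mathrm{Binom}(N(1-x),y)$ (and $(y+z)x$ by $\tfrac1N\mathrm{Binom}(Nx,y+z)$) costs a Taylor error dominated by the binomial variance $\tfrac{(1-x)y(1-y)}{N}$; integrated against $\Lambda^{(N)}$ this is bounded by $\tfrac{C}{N}\int_{\Delta^N}(y+z)\Lambda$, and since $y\le N^{\alpha/2}y^2$ on $\Delta^N$ this is $O(N^{\alpha/2-1})\to 0$. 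The \emph{truncation}: the part of $\mathcal{A}f$ carried by the complement of $\Delta^N$ tends to $0$ because, by the same cancellation, its integrand is $O(z)+O(y^2)$, the region shrinks to the origin, and $\int(y^2+z)\Lambda<\infty$ gives dominated convergence. Together these give the required uniform convergence.

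Next I would prove tightness of $(Y^N)_N$ in $\mathbb{D}([0,T],[0,1])$ via the Aldous--Rebolledo criterion. Compact containment is automatic since $Y^N\in[0,1]$. From the $\Lambda^{(N)}$-analogue of the semimartingale decomposition \eqref{sde_ASG_1}, the predictable drift has rate $-x(1-x)\int_{\Delta^N}z\,\Lambda$, bounded by $\int z\,\Lambda<\infty$, and the predictable quadratic variation of the martingale part has rate bounded by $\int_{\Delta^N}\big(y^2+(y+z)^2\big)\Lambda\le\int_{\Delta}\big(2y^2+3z\big)\Lambda<\infty$; hence both increments over a stopping-time interval $[\tau,\tau+h]$ are $O(h)$ uniformly in $N$, yielding tightness. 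With generator convergence on a core and well-posedness of the $\mathcal{A}$-martingale problem, the standard martingale-problem limit theorem of Ethier and Kurtz identifies every limit point with the law of $Y$ and gives $Y^N\Rightarrow Y$; the initial conditions match since $Y^N_0=\lfloor xN\rfloor/N\to x$.

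The main obstacle is the accumulation of infinitely many small reproduction events near the origin of $\Delta$, where one may have $\int_\Delta y\,\Lambda=\infty$: this is precisely why the limit requires the compensated integral in \eqref{sde_n_2}, why the proof passes through the truncated measures $\Lambda^{(N)}$ whose total mass grows like $N^{\alpha}$, and why the cancellation of the linear-in-$y$ terms between $\ominus$- and $\oplus$-events is indispensable, since without it neither $\mathcal{A}f$ nor the truncation remainder would be finite. The delicate point is therefore to show that the growing mass $\Lambda^{(N)}(\Delta)=O(N^{\alpha})$ does not amplify the per-event discreteness error; the choice $\alpha\in(0,1/2)$ guarantees the balance $N^{\alpha/2-1}\to0$ with room to spare and, together with $\int(y^2+z)\Lambda<\infty$, closes the estimate. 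An alternative route is a pathwise coupling of $Y^N$ and $Y$ on a common Poisson random measure followed by a Gronwall estimate, but then the binomial reproduction numbers would have to be coupled to the deterministic proportions $y(1-x)$, which makes the martingale-problem argument above cleaner.
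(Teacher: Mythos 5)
Your proposal is correct in substance, and its core computation coincides with the paper's: both establish uniform convergence of generators by splitting the error into a discreteness part (binomial jump versus deterministic jump $y(1-x)$, resp. $(y+z)x$) and a truncation part, and both rely on the cancellation of the linear-in-$y$ terms between the $\ominus$ and $\oplus$ contributions to make the limiting integrand $O(y^2+z)$, exactly as in \eqref{freq_ASG_4}. The differences are at the two ends of the argument. For the discreteness error the paper uses a first-order estimate plus Cauchy--Schwarz, giving a per-event error $O(N^{-1}+N^{-1/2}y^{1/2})$ and hence a total error $O(N^{\alpha-1/2})$ via the mass bound $\Lambda(\Delta^N)\le N^{\alpha}\int_{\Delta}(y^2+z)\Lambda(dy,dz)$; your variance-based bound $O((y+z)/N)$ per event is sharper and yields $O(N^{\alpha/2-1})$ --- both vanish for $\alpha\in(0,1/2)$. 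For the conclusion, the paper invokes Kallenberg's Theorem 17.25 (weak convergence of Feller processes from uniform generator convergence, with tightness built into that theorem), whereas you prove Aldous--Rebolledo tightness by hand and identify limit points through the martingale problem; this forces you to assert that strong well-posedness of \eqref{sde_n_2} implies well-posedness of the $\mathcal{A}$-martingale problem, which is standard (Yamada--Watanabe plus the correspondence between martingale-problem solutions and weak solutions of jump SDEs) but is a genuine extra step you leave unproved --- much as the paper leaves the Feller/core hypotheses of Kallenberg's theorem unverified, so the two routes carry comparable technical debt. One shared imprecision worth noting: the complement $\Delta\setminus\Delta^N=\{(y,z)\in\Delta: y^2\le N^{-\alpha}\}$ does not shrink to the origin but to the segment $\{y=0\}$, so dominated convergence kills the truncation error only if $\Lambda$ puts no mass on $\{y=0,\,z>0\}$; the paper's display \eqref{freq_ASG_5} makes exactly the same implicit assumption, so this is a defect of the setup rather than of your argument specifically.
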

Let $A^{N}:=(A^N_t)_{t>0}$ denote the ancestral process associated to the frequency process $Y^N$ defined in Proposition \ref{asy_a}. In the next result we show the convergence of the sequence of processes $(A^N)_{N\in\mathbb{N}}$ as the total size of the population $N$ grows to infinity.
\begin{prop}\label{anc_proc}
The sequence $(A^{N})_{N\in\N}$ converges weakly in $\mathbb{D}([0,T],\mathbb{N})$ to a limit $A=(A_t)_{t\geq0}$, where $A$ is a continuous-time Markov chain with values in $\mathbb{N}$, whose generator has the following transition rates
\[m\mapsto \begin{cases}
	m-k+1 &\text{ at rate } \int_{\Delta} \binom{m}{k} y^k (1-y)^{m-k}\Lambda(dy,dz),\quad k=2,...,m \\
	m+1& \text{ at rate } \int_{\Delta} [(1-y)^{m}-(1-y-z)^{m}]\Lambda(dy,dz).
\end{cases}\]
\end{prop}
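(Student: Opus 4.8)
The plan is to prove the convergence by establishing convergence of the infinitesimal transition rates of $A^N$ to those of the limit chain $A$, together with non-explosion of $A$ and uniform compact containment of the sequence, and then to invoke a standard weak-convergence theorem for Markov jump processes (Ethier--Kurtz). First I recall that $A^N$ is the ancestral line counting process associated with the truncated finite measure $\Lambda^{(N)}=\Lambda|_{\Delta^N}$, so by the Proposition of Section~\ref{sect:ancestral_process} its generator $\mathcal{A}^N$ has exactly the rates stated there, with every integral $\int_{\Delta}\cdots\,\Lambda(dy,dz)$ replaced by the integral over $\Delta^N$ and $N$ the population size. The heuristic behind the statement is that the $n/N$ ``reproducing-individual-inside-the-sample'' contribution to the coalescence rate disappears, while the $(1-n/N)$ contributions converge to the displayed limit rates.

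The core step is the pointwise convergence $\mathcal{A}^N f\to\mathcal{A}f$ for bounded $f$ of finite support. I split the downward rate $n\mapsto n-k$ into its two mechanisms. The within-sample term carries a factor $n/N$ and an integrand of order $y$ near $0$; since $y>N^{-\alpha/2}$ on $\Delta^N$, one has $\int_{\Delta^N}y\,\Lambda(dy,dz)\le N^{\alpha/2}\int_{\Delta}y^2\,\Lambda(dy,dz)$, so summed over all $k$ this contribution is $O(N^{\alpha/2-1})$ and vanishes precisely because $\alpha<1/2$. The outside-sample term carries a factor $1-n/N\to1$ and an integrand $\binom{n}{k+1}y^{k+1}(1-y)^{n-1-k}=O(y^2)$ near $0$, dominated by the $\Lambda$-integrable function $\binom{n}{k+1}y^2$; since $\Lambda^{(N)}\uparrow\Lambda$, dominated convergence gives convergence to $\int_{\Delta}\binom{n}{k+1}y^{k+1}(1-y)^{n-1-k}\Lambda(dy,dz)$, which after the reindexing $k\mapsto k-1$ is the stated rate for $m\mapsto m-k+1$, $k=2,\dots,m$. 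For the branching rate the factor $1-n/N\to1$ and $(1-y)^{n}-(1-y-z)^{n}\le nz$ is dominated by the integrable $nz$, so it converges to $\int_{\Delta}[(1-y)^{n}-(1-y-z)^{n}]\Lambda(dy,dz)$. Summing the finitely many terms yields $\mathcal{A}^N f(m)\to\mathcal{A}f(m)$ for each fixed $m$.

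It remains to control the limit chain and to secure tightness. The upward rate is bounded by $\int_{\Delta}[(1-y)^{m}-(1-y-z)^{m}]\Lambda(dy,dz)\le m\int_{\Delta}z\,\Lambda(dy,dz)=:Cm$ (using $(1-y)^{m}-(1-y-z)^{m}\le mz$ and $\int z\,\Lambda<\infty$), so $A$ is stochastically dominated by a Yule process of rate $C$; hence $A$ is non-explosive, its martingale problem is well posed, and it is a genuine Markov chain on $\N$ on $[0,T]$. Because $\Lambda^{(N)}\le\Lambda$, the same linear bound $Cm$ holds uniformly in $N$ for the upward rates of $A^N$ while downward jumps only decrease the value, so each $A^N$ is dominated by the same Yule process; by a coupling this gives uniform compact containment and hence tightness of $(A^N)_N$ in $\mathbb{D}([0,T],\N)$. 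With tightness, generator convergence on a core, and well-posedness of the limiting martingale problem, the convergence theorem for Markov processes (Ethier--Kurtz, Theorem~4.2.11/Corollary~4.8.16) identifies every weak limit point as $A$ and yields $A^N\Rightarrow A$.

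The main obstacle is the interplay between the singularity of $\Lambda$ at $y=0$ and the truncation level $\alpha$: one must verify that $\alpha<1/2$ is exactly what forces the spurious within-sample coalescence term (only of order $y$) to disappear in the limit, while the genuine multiple-merger and branching rates (of orders $y^2$ and $z$) survive under the assumption $\int_{\Delta}(y^2+z)\,\Lambda(dy,dz)<\infty$. Once this balance is pinned down, the remaining estimates---dominated convergence for the surviving terms and the linear upper bound ensuring non-explosion and uniform tightness---are routine.
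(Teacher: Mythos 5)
Your proposal is correct and takes essentially the same route as the paper's proof: the identical splitting of the rates into within-sample ($n/N$), outside-sample ($1-n/N$) and branching contributions, the identical truncation estimate $y\le N^{\alpha/2}y^{2}$ on $\Delta^{N}$ to make the within-sample terms vanish, and the identical dominated-convergence bounds ($\binom{m}{k}y^{2}$ for the coalescence terms, $mz$ for the branching term); the only difference is that where the paper finishes by citing Problem 3(ii) of Ethier--Kurtz, you verify the hypotheses of the standard convergence machinery yourself (non-explosion of $A$ and uniform compact containment via domination by a Yule process of rate $C=\int_{\Delta}z\,\Lambda(dy,dz)$), which is a sound and slightly more self-contained way to conclude. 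One small correction to your closing remark: the within-sample contribution is $O\left(m^{2}N^{\alpha/2-1}\right)$ and hence vanishes for any $\alpha<2$, so $\alpha<1/2$ is not \emph{exactly} what this step requires; that restriction is instead dictated by the convergence of the frequency processes (Proposition \ref{asy_a}, proved in \ref{appendix_2}), where an error of order $N^{\alpha-1/2}$ appears.
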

\begin{proof}
Fix $m\in \mathbb{N}$. Using that $|(1-y)^m-(1-y-z)^m|\leq m z$, for $y,z\in\Delta$ we obtain that \eqref{int} together with dominated convergence gives
\begin{align*}
\lim_{N\to\infty}\left(1-\frac{m}{N}\right)\int_{\Delta^{N}}\left[(1-y)^m-(1-y-z)^m\right]\Lambda(dy,dz)=\int_{\Delta}\left[(1-y)^m-(1-y-z)^m\right]\Lambda(dy,dz).
\end{align*}
On the other hand, for $k\geq 2$, by \eqref{int} and dominated convergence
\begin{align*}
	\lim_{N\to\infty}\left(1-\frac{m}{N}\right)\int_{\Delta^{N}}\binom{m}{k}y^k(1-y)^{m-k}\Lambda(dy,dz)=\int_{\Delta}\binom{m}{k}y^k(1-y)^{m-k}\Lambda(dy,dz).
\end{align*}
By the definiton of $\Delta^N$ we note that dominated convergence gives
\begin{align*}
\lim_{N\to\infty}\frac{m}{N}\int_{\Delta^{N}}my(1-y)^{m-1}\Lambda(dy,dz)\leq\lim_{N\to\infty} m^2\frac{1}{N^{1-\alpha/2}}\int_{\Delta^{N}}y^2\Lambda(dy,dz)=0.
\end{align*}
Similarly, for $k\geq 2$, proceeding as in the previous identities we get
\begin{align*}
\lim_{N\to\infty}\frac{m}{N}\int_{\Delta^{N}}\binom{m}{k}y^k(1-y)^{m-k}\Lambda(dy,dz)=0.
\end{align*}
Therefore, by Problem 3(ii) in \cite{EK} we obtain the result.
\end{proof}
As expected we show in the next result that the processes $Y$ and $A$ are moment duals. To this end, we note that the infinitesimal generator $\mathcal{A}$ of the process $A$ is given for any $f:\mathbb{N}\mapsto\mathbb{N}$ by
\begin{align}\label{inf_2}
\mathcal{A}f(n)&=\sum_{k=1}^{n}(f(n-k+1)-f(n))\int_{\Delta} \binom{n}{k} y^k (1-y)^{n-k}\Lambda(dy,dz)\notag\\&+(f(n+1)-f(n))\int_{\Delta} [(1-y)^{n}-(1-y-z)^{n}]\Lambda(dy,dz).
\end{align}
\begin{prop}\label{moment_duality}
Consider the $\Lambda$-asymmetric frequency process $Y$ defined in Proposition \ref{sde_exis} and the ancestral process $A$ with rates given in Proposition \ref{anc_proc}. Then the processes $Y$ and $A$ are moment duals, i.e. for any $x\in[0,1]$ and $n\in\mathbb{N}$
\[
\mathbb{E}_x\left[Y_t^n\right]=\E_n\left[x^{A_t}\right], \qquad  \text{for all $t\geq0$}.
\]
\end{prop}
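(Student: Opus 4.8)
The plan is to establish the duality at the level of generators and then invoke a standard generator-duality criterion (cf. \cite{EK}) to lift it to the processes. Set $H(x,n)=x^n$ for $x\in[0,1]$, $n\in\mathbb{N}$; this is the duality function, it is bounded by $1$, continuous in $x$, and satisfies $H(x,n)=x^n$ at $t=0$ consistently with $Y_0=x$, $A_0=n$. Writing $\mathcal{L}$ for the (extended) generator of $Y$ and $\mathcal{A}$ for the generator of $A$ given in \eqref{inf_2}, it suffices to verify the pointwise identity
\[\mathcal{L}H(\cdot,n)(x)=\mathcal{A}H(x,\cdot)(n)\qquad\text{for all }x\in[0,1],\ n\in\mathbb{N},\]
together with integrability conditions guaranteeing that $s\mapsto\mathcal{L}H(Y_s,n)$ and $s\mapsto\mathcal{A}H(x,A_s)$ are integrable on $[0,t]$; the moment duality then follows.

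First I would read off the generator of $Y$ from the SDE \eqref{sde_n_2}. For $g\in\mathcal{C}^2([0,1])$ the compensated Poisson integral contributes
\[\int_\Delta\Big(x\big[g(x+y(1-x))-g(x)-y(1-x)g'(x)\big]+(1-x)\big[g(x-(y+z)x)-g(x)+(y+z)x\,g'(x)\big]\Big)\Lambda(dy,dz),\]
while the drift term in \eqref{sde_n_2} contributes $-x(1-x)g'(x)\int_\Delta z\,\Lambda(dy,dz)$. The net first-order terms produced by the compensator sum to $+x(1-x)g'(x)\int_\Delta z\,\Lambda(dy,dz)$ and therefore cancel exactly against the drift, leaving the finite quantity
\[\mathcal{L}g(x)=\int_\Delta\Big(x\big[g(x+y(1-x))-g(x)\big]+(1-x)\big[g(x-(y+z)x)-g(x)\big]\Big)\Lambda(dy,dz).\]
This is well defined under \eqref{int}: by a first-order Taylor estimate the integrand is $O(y^2)$ for the first jump type and $O(z)$ for the second, so $\int_\Delta(y^2+z)\Lambda<\infty$ makes it integrable. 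Taking $g(x)=x^n$ and using $x-(y+z)x=x(1-(y+z))$ gives $\mathcal{L}H(\cdot,n)(x)=I_1+I_2$ with $I_1=\int_\Delta x[(x+y(1-x))^n-x^n]\Lambda$ and $I_2=\int_\Delta(1-x)x^n[(1-(y+z))^n-1]\Lambda$.

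The core computation is then to match $I_1+I_2$ with $\mathcal{A}H(x,\cdot)(n)$, evaluated from \eqref{inf_2} as $J_1+J_2$, where $J_1=\int_\Delta\sum_{k=1}^n(x^{n-k+1}-x^n)\binom{n}{k}y^k(1-y)^{n-k}\Lambda$ (the $k=1$ term vanishes) and $J_2=-\int_\Delta(1-x)x^n[(1-y)^n-(1-y-z)^n]\Lambda$. The key algebraic identity is
\[\sum_{k=0}^n\binom{n}{k}y^k(1-y)^{n-k}x^{n-k}=(x+y(1-x))^n,\]
obtained by reading $(1-y)^{n-k}x^{n-k}=(x(1-y))^{n-k}$ and collapsing the binomial sum. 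Isolating the $k=0,1$ terms with this identity shows $J_1=I_1+(1-x)x^n[(1-y)^n-1]$ pointwise in $(y,z)$; adding $J_2$ and telescoping the bracket $[(1-y)^n-1]-[(1-y)^n-(1-y-z)^n]=(1-(y+z))^n-1$ yields exactly $I_2$. Hence $J_1+J_2=I_1+I_2$, the desired generator identity. I would also check directly from \eqref{inf_2} that $\mathcal{A}H(x,\cdot)(n)$ is finite: the coalescence integrand is $O(y^2)$ for $k\ge2$ and the branching integrand is bounded by $nz$, so both sides are integrable under \eqref{int}.

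The main obstacle is not the algebra but verifying the hypotheses of the generator-duality theorem in the presence of the possible singularity of $\Lambda$ at the origin. Concretely, one must ensure that $H(\cdot,n)$ lies in a domain for which the martingale problem for $Y$ holds and that the exchange of expectation and generator is justified; the compensated form of \eqref{sde_n_2} and the bound $\int_\Delta(y^2+z)\Lambda<\infty$ are precisely what control this, and a truncation at the origin (as already used for $Y^N$ before Proposition \ref{asy_a}) followed by a limiting argument removes any remaining difficulty. As an alternative route, one could instead pass the finite-$N$ sampling duality of Corollary \ref{cor:duality} to the limit, using $S_0(x,n)\to x^n$ together with the weak convergences of Propositions \ref{asy_a} and \ref{anc_proc} and the uniform bound $0\le S_0\le1$; this avoids the generator computation but requires controlling the non-escape of mass of $A^N_t$.
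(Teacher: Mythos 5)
Your proposal is correct and takes essentially the same route as the paper: both establish the pointwise generator identity for the duality function $H(x,n)=x^n$ (your binomial-identity/telescoping computation is the same algebra as the paper's term-by-term expansion, and the paper likewise works with the generator \eqref{inf_1} of $Y$, whose integrand it bounds by a constant times $y^2+z$ just as you do), and then lift the generator duality to moment duality by citing a standard result (the paper invokes Proposition 5 of \cite{CGP} where you invoke an Ethier--Kurtz-type criterion). There is no substantive difference in approach.
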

\begin{proof}
We will consider $\N$ endowed with the discrete topology and $\N\times [0,1]$ with the product topology. We denote for every fixed $x\in[0,1]$, $H(n,x)=x^n$, which is bounded and continuous. %Note that $\N_0\cup \{\Delta\}$ is equipped with the discrete topology and $\N_0\cup \{\Delta\}\times [0,1]$ with the product topology. 
In addition, for every fixed $k\in \N$, $H(k,x)=x^k$ is continuous. Therefore, we conclude that $H:\N\times [0,1]\mapsto [0,1]$ is continuous.

We observe that $H(\cdot,n)$ is a polynomial in $[0,1]$ for fixed $n\in\mathbb{N}$. This fact clearly implies that $H(\cdot,n)\in\mathcal{C}^2([0,1])$ and hence it lies in the domain of the generator $\mathcal{B}$ of the $\Lambda$-asymmetric frequency process $Y$ as in \eqref{inf_1}. Therefore, the process
\[
H(n,Y_t)-\int_0^t\mathcal{B}H(n,Y_s)ds,
\]
is a martingale.

Additionally, we have that for fixed $x\in[0,1]$ the function $H(\cdot,x)$ lies in the domain of the generator $\mathcal{A}$, which implies that the process
\[
H(A_t,x)-\int_0^t\mathcal{A}H(A_s,x)ds,
\]
is also a martingale.

We will compute $\mathcal{B}H(n,x)$ for $x\in[0,1]$ and $n\in\mathbb{N}$, then using \eqref{inf_1}
\begin{align}\label{dual_1}
\mathcal{B}H(n,x)=\int_{\Delta}\left[x(x+y(1-x))^n+(1-x)(x-(y+z)x)^n-x^n\right]\Lambda(dy,dz), \ x\in[0,1], n\in\mathbb{N}.
\end{align}
We note that
\begin{align*}
x(x+y(1-x))^n=x\sum_{k=0}^n\binom{n}{k}y^kx^{n-k}(1-y)^{n-k}=x^{n+1}(1-y)^n+\sum_{k=1}^n\binom{n}{k}y^kx^{n-k+1}(1-y)^{n-k}.
\end{align*}
Additionaly,
\begin{align*}
(1-x)(x-(y+z)x)^n=(x^{n}-x^{n+1})(1-y-z)^{n},
\end{align*}
and finally
\begin{align*}
x^{n}=x^{n}(1-y)^n+x^n\sum_{k=1}^n\binom{n}{k}y^k(1-y)^{n-k}.
\end{align*}
Hence, using the previous identities gives
\begin{align}\label{dual_2}
x(x+y(1-x))^n+(1-x)(x-(y+z)x)^n-x^n&=\sum_{k=2}^n\binom{n}{k}y^k(1-y)^{n-k}(x^{n-k+1}-x^n)\notag\\
&+(x^{n+1}-x^n)\left[(1-y)^n-(1-y-z)^{n}\right].
\end{align}
Using \eqref{dual_2} in \eqref{dual_1} we obtain
\begin{align*}
\mathcal{B}H(n,x)&=\sum_{k=2}^n(x^{n-k+1}-x^n)\int_{\Delta}\binom{n}{k}y^k(1-y)^{n-k}\Lambda(dy,dz)\notag\\
&+(x^{n+1}-x^n)\int_{\Delta}\left[(1-y)^n-(1-y-z)^{n}\right]\Lambda(dy,dz)=\mathcal{A}H(n,x).
\end{align*}
Finally an application of Proposition 5 in \cite{CGP} gives the result. 
\end{proof}

\section{Griffiths representation and the probability of fixation}\label{sect:Griffiths}
We motivate our next result by recalling a result for the two-type $\Lambda$-Fleming Viot process, which is defined as the solution to the SDE
	\begin{align*}%\label{FV}
		Z_t&=y+\int_0^t\int_0^1\int_{\Delta}\left(y(1-Z_{s-})1_{\{u\leq Z_{s-}\}}-yZ_{s-}1_{\{u\geq Z_{s-}\}}\right)1_{\{Z_{s-}\in[0,1]\}}\tilde{N}(ds,du,dy)\notag\\&-\int_0^ts(1-Z_{s-})Z_{s-}1_{\{Z_{s-}\in[0,1]\}}ds,\quad t\geq0.
	\end{align*}
	Independently and using different techniques, Foucart \cite{Foucart} and Griffiths \cite{Griff} obtained explicit conditions for the two-type $\Lambda$-Fleming Viot process to be absorbed at 0 almost surely, or to be absorbed in either of the boundaries $\{0,1\}$ with positive probability. In particular, they observed that for every $\Lambda$ it is always possible to find a small enough selection parameter $s$ such that $\P(\lim_{t\rightarrow \infty}Z_t=1)\P(\lim_{t\rightarrow \infty}Z_t=0)>0$. Foucart uses a duality technique which relies on the observation that the $\Lambda$-Fleming Viot process can be absorbed at $1$ if and only if its dual, that we denote $(F_t)_{t\geq 0}$, is positive recurrent. The process $(F_t)_{t\geq 0}$ coalesces like the block counting process of the $\Lambda$-coalescent and branches at rate $ns$.  In our model, the dual process is given by the ancestral line counting process $(A_t)_{t\geq 0}$. It coalesces at the same rate and branches at rate $ \int_{0}^{1}\int_{0}^{1}\left[(1-y)^{n-1}-(1-y-z)^{n-1}\right]\Lambda(dy,dz)$ which is smaller than $ns$ for all $n>n(s)$, for some $n(s)\gg 1$, which exists for every $s$. This implies that $(F_t)_{t\geq 0}$ reflected in $n(s)$ stochastically dominates $(A_t)_{t\geq 0}$. In turn this implies that $(A_t)_{t\geq 0}$ is positive recurrent. A duality argument allows to conclude that the limiting frequency process $(Y_t)_{t\geq 0}$ in equation \eqref{sde_exis} fullfils $\P(\lim_{t\rightarrow \infty}Y_t=1)\P(\lim_{t\rightarrow \infty}Y_t=0)>0$.

Here, instead of formalising the previous argument, we will exploit Griffiths' technique in order to compute a semi-explicit expression for the probability of fixation of type $\ominus$ individuals in a  $\Lambda$-asymmetric frequency process. To this end, we will start by obtaining an alternative expression for its generator $\mathcal{B}$, given in \eqref{inf_1}, following the ideas of Theorem 1 in \cite{Griff}. This is given in the next result.
We define the measure $\tilde{\Lambda}$ on the simplex $\Delta$ by
$
\tilde{\Lambda}(dy,dz):=(y^2+z)\Lambda(dy,dz).
$
\begin{prop}
Let $\mathcal{B}$ be the infinitesimal generator of a $\Lambda$-asymmetric frequency process as in \eqref{inf_1}. Let $U,V,Y,Z$ be independent random variables, such that $V$ has a uniform distribution on $[0,1]$, $U$ has density $2u$ with respect to the Lebesgue measure, and $(Y,Z)$ is distributed according to $\tilde{\Lambda}/\|\tilde{\Lambda}\|$. Then, for any $f\in\mathcal{C}^{2}([0,1])$,
\begin{align*}
	\mathcal{B}f(x)=\frac{\|\tilde{\Lambda}\|}{2}x(1-x)\E\left[f''(x(1-UY)+UYV)\frac{Y^2}{Z+Y^2}-2 f'(x-xY-xZV)\frac{Z}{Z+Y^2}\right].
\end{align*}
\end{prop}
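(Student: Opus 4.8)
The plan is to start from the explicit form of the generator $\mathcal{B}$ recorded in \eqref{inf_1} (and reproduced, for $f(x)=x^n$, in \eqref{dual_1}), namely
\[\mathcal{B}f(x)=\int_{\Delta}\Big[x\big(f(x+y(1-x))-f(x)\big)+(1-x)\big(f(x-(y+z)x)-f(x)\big)\Big]\Lambda(dy,dz),\qquad f\in\mathcal{C}^2([0,1]),\]
and to reorganise the integrand into a part that will produce the $f''$-term and a part that will produce the $f'$-term. The key algebraic step is to split the $\oplus$-increment as $f(x-(y+z)x)-f(x)=\big(f(x-xy)-f(x)\big)+\big(f(x-(y+z)x)-f(x-xy)\big)$ and to group its first summand with the $\ominus$-increment. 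Writing $g(y):=x\big(f(x+y(1-x))-f(x)\big)+(1-x)\big(f(x-xy)-f(x)\big)$ for the resulting \emph{neutral} part, the integrand becomes $g(y)+(1-x)\big(f(x-(y+z)x)-f(x-xy)\big)$.

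For the \emph{selective} remainder I would apply the fundamental theorem of calculus along the segment from $x-xy$ to $x-(y+z)x$: with $V$ uniform on $[0,1]$,
\[(1-x)\big(f(x-(y+z)x)-f(x-xy)\big)=-\,x(1-x)\,z\int_0^1 f'(x-xy-xzV)\,dV.\]
Integrating against $\Lambda$ then gives $-x(1-x)\int_{\Delta} z\,\E_V\!\big[f'(x-xy-xzV)\big]\Lambda(dy,dz)$, which after rewriting the $z\,\Lambda$-mass through $\tilde{\Lambda}$ produces exactly the $f'$ contribution in the statement (the factor $-2$ and the weight $Z/(Z+Y^2)$ materialise only upon passing to the normalised measure, see the last paragraph).

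The heart of the argument is the neutral part $g$. Since $g(0)=0$ and, differentiating, $g'(0)=x(1-x)\big(f'(x)-f'(x)\big)=0$, a second-order Taylor expansion with integral remainder yields
\[g(y)=\int_0^y (y-t)\,g''(t)\,dt,\qquad g''(t)=x(1-x)\big[(1-x)f''(x+t(1-x))+x\,f''(x-tx)\big].\]
I would then substitute $\xi=x+t(1-x)$ in the gain term and $\xi=x-tx$ in the loss term to rewrite $g(y)=x(1-x)\int f''(\xi)\,\rho_y(\xi)\,d\xi$ for an explicit tent-shaped weight $\rho_y$ supported on $[x(1-y),\,x+y(1-x)]$ and peaking at $\xi=x$; separately, I would reduce the target expectation $\tfrac{y^2}{2}\E_{U,V}[f''(x(1-Uy)+UyV)]$ to an integral of $f''$ against a weight by writing $x(1-Uy)+UyV=x+Uy(V-x)$, integrating over $V$ first and then over $U$ via Fubini. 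Matching the two weights gives the pointwise identity $g(y)=\frac{x(1-x)}{2}\,y^2\,\E_{U,V}\!\big[f''(x(1-Uy)+UyV)\big]$. \textbf{This matching computation is the main obstacle}: one must verify that the gain and loss remainders recombine into a single second derivative evaluated at the common mixture $x(1-Uy)+UyV$, and that the mixing law of $U$ is precisely the density $2u$ — indeed $2u$ is exactly what cancels the factor $1/(uy)$ coming from the inner $V$-average over an interval of length $uy$, leaving a flat integration in $u$ that reproduces the linear ramps of $\rho_y$.

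Finally I would integrate the pointwise identity over $\Lambda$, add the selective contribution, and convert both integrals using $\int_{\Delta}\phi(y,z)\Lambda(dy,dz)=\|\tilde{\Lambda}\|\,\E\big[\phi(Y,Z)/(Y^2+Z)\big]$ for $(Y,Z)\sim\tilde{\Lambda}/\|\tilde{\Lambda}\|$, with $\tilde{\Lambda}(dy,dz)=(y^2+z)\Lambda(dy,dz)$. Choosing $\phi(y,z)=y^2\,\E_{U,V}[f''(\cdots)]-2z\,\E_V[f'(\cdots)]$ turns the weights $y^2$ and $z$ into $Y^2/(Y^2+Z)$ and $Z/(Y^2+Z)$ and extracts the prefactor $\|\tilde{\Lambda}\|/2$, giving precisely the claimed representation. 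All interchanges of integration are legitimate by Fubini, since $f'$ and $f''$ are bounded and $\int_{\Delta}(y^2+z)\Lambda(dy,dz)<\infty$ by the standing assumption \eqref{int}, which in particular guarantees $\|\tilde{\Lambda}\|<\infty$.
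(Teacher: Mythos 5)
Your proposal is correct, and the step you flag as the main obstacle does go through: integrating out $V$ in $\E_{U,V}[f''(x(1-Uy)+UyV)]$ gives $\frac{1}{uy}\bigl[f'(x+uy(1-x))-f'(x(1-uy))\bigr]$, the density $2u$ cancels the $1/u$, and Fubini in $u$ produces exactly your tent kernel $\rho_y$, which coincides with the Taylor-remainder kernel of the neutral part $g$; hence $g(y)=\frac{x(1-x)}{2}y^2\,\E_{U,V}[f''(x(1-Uy)+UyV)]$ as claimed, and the selective term and the change of measure to $\tilde{\Lambda}$ work as you describe. The difference from the paper's proof is one of direction and organization rather than of substance. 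The paper starts from the right-hand side and integrates out $V$ and then $U$ by the fundamental theorem of calculus, reducing the $f''$-expectation directly to the increments $\frac{f(x(1-Y)+Y)-f(x)}{1-x}+\frac{f(x(1-Y))-f(x)}{x}$, treats the $f'$-expectation the same way, and then recombines these increments into the generator; no Taylor expansion or kernel matching appears, and no prior decomposition of $\mathcal{B}f$ is needed. You instead work forward from the generator, split it into a neutral part and a selective remainder, and prove the key identity for the neutral part via a second-order Taylor expansion with integral remainder plus an identification of the mixture law of $x(1-UY)+UYV$ (a triangular density peaked at $x$) with the Taylor kernel. Your route is longer, but it makes the probabilistic role of the density $2u$ transparent -- it is precisely what flattens the $V$-average into the linear ramps of the Taylor kernel -- whereas the paper's computation is shorter and purely mechanical, being the same two applications of the fundamental theorem of calculus run in the opposite direction.
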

\begin{proof}
Using that $V$ has a uniform distribution on $[0,1]$ we obtain that
\begin{align}\label{griff_1}
\E\left[f''(x(1-UY)+UYV)\frac{Y^2}{Z+Y^2}\right]&=\E\left[\frac{Y^2}{Z+Y^2}\int_0^1f''(x(1-UY)+UYv)dv\right]\notag\\
&=\E\left[\frac{Y}{Z+Y^2}\left[f'(x(1-UY)+UY)-f'(x(1-UY))\right]\frac{1}{U}\right]\notag\\
&=\E\left[\frac{2}{Z+Y^2}\left(\frac{f(x(1-Y)+Y)-f(x)}{1-x}+\frac{f(x(1-Y))-f(x)}{x}\right)\right]\notag\\
&=\frac{2}{\|\tilde{\Lambda}\|}\int_{\Delta}\left[\frac{f(x(1-y)+y)-f(x)}{1-x}+\frac{f(x(1-y))-f(x)}{x}\right]\frac{\tilde{\Lambda}(dy,dz)}{z+y^2}
\end{align}
where in the third equality we used that $U$ has density given by $2u$ on $[0,1]$.

In a similar way, we obtain
\begin{align}\label{griff_2}
\E\left[\frac{Z}{Z+Y^2}f'(x-xY-xZV)\right]&=-\E\left[\frac{Z}{Z+Y^2}\left(f(x-xZ-xY)-f(x-xY)\right)\frac{1}{xZ}\right]\notag\\
&=-\frac{1}{\|\tilde{\Lambda}\|}\int_{\Delta}\left[\frac{f(x-xy-xz)-f(x-xy)}{x}\right]\frac{\tilde{\Lambda}(dy,dz)}{z+y^2}.
\end{align}
Therefore, using \eqref{griff_1} and \eqref{griff_2} we obtain for $x\in[0,1]$
\begin{align*}
\frac{\|\tilde{\Lambda}\|}{2}&x(1-x)\E\left[f''(x(1-UY)+UYV)\frac{Y^2}{Z+Y^2}-2 f'(x-xZV)\frac{Z}{Z+Y^2}\right]\notag\\&=\int_{\Delta}\left[x(f(x(1-y)+y)-f(x))+(1-x)(f(x(1-y))-f(x))\right]\frac{\tilde{\Lambda}(dy,dz)}{z+y^2}\notag\\
&+\int_{\Delta}(1-x)\left[f(x(1-y-z))-f(x(1-y))\right]\frac{\tilde{\Lambda}(dy,dz)}{z+y^2}\notag\\
&=\int_{\Delta}\left[x(f(x(1-y)+y)-f(x))+(1-x)(f(x(1-y-z))-f(x))\right]\frac{\tilde{\Lambda}(dy,dz)}{z+y^2}\notag\\
&=\mathcal{B}f(x).
\end{align*}
\end{proof}

Denote by $p(x)$ for $x\in[0,1]$ the probability of fixation of type $\ominus$ in the $\Lambda$-asymmetric frequency process,  where $x$ denotes the initial frequency of type $\ominus$ individuals.  Since $p$ is a harmonic function for the generator $\mathcal{B}$, we see from the previous proposition that it satisfies for $x\in(0,1)$
\begin{align}\label{eq_prob}
	\E\left[p''(x(1-UY)+UYV)\frac{Y^2}{Z+Y^2}-2p'(x-xY-xZV)\frac{Z}{Z+Y^2}\right]=0,
\end{align}
with boundary conditions given by $p(0)=0$ and $p(1)=1$.

We note that
\begin{align}\label{griff_0}
	\E\left[p''(x(1-UY)+UYV)\frac{Y^2}{Z+Y^2}\right]=\E\left[\frac{p'(x(1-W)+W)-p'(x(1-W))}{W}\frac{Y^2}{Z+Y^2}\right],
\end{align}
with $W:=UY$.

In order to find the solution to \eqref{eq_prob}, following \cite{Griff}, we will consider polynomials of the form
\begin{align}\label{poly}
	h_n(x)=\sum_{r=0}^{n}a_{n,r}x^r,
\end{align}
with $h_0(x)=1$.

Then, we will prove that we can take a choice of coefficients $\{a_{n,r}\}_{0\leq r\leq n}$ such that
\begin{align}\label{cond_griff}
	\E\left[\frac{h_n(x(1-W)+W)-h_n(x(1-W))}{W}\frac{Y^2}{Z+Y^2}\right]=n\E\left[h_{n-1}\left(x(1-Y-ZV)\right)\frac{Z}{Z+Y^2}\right].
\end{align}
with the choice
\begin{equation}\label{first}
a_{n,n}=\prod_{i=1}^{n-1}\frac{\E\left[(1-Y-ZV)^i\displaystyle\frac{Z}{Z+Y^2}\right]}{\E\left[(1-W)^i\displaystyle\frac{Y^2}{Z+Y^2}\right]}, \qquad n=0,1\dots
\end{equation}
for the diagonal elements. Indeed, by \eqref{poly}
\begin{align*}
	\E\left[\frac{h_n(x(1-W)+W)-h_n(x(1-W))}{W}\frac{Y^2}{Z+Y^2}\right]&=\E\left[\frac{Y^2}{Z+Y^2}\sum_{r=0}^n\frac{a_{n,r}}{W}\left[(x(1-W)+W)^r-(x(1-W))^r\right]\right]\notag\\
	&=\sum_{r=1}^na_{n,r}\sum_{j=0}^{r-1}\binom{r}{j}\E\left[(1-W)^jW^{r-j-1}\frac{Y^2}{Z+Y^2}\right]x^{j}\notag\\
	&=\sum_{j=0}^{n-1}\sum_{r=j+1}^n\binom{r}{j}\E\left[(1-W)^jW^{r-j-1}\frac{Y^2}{Z+Y^2}\right]a_{n,r}x^j.
\end{align*}
Hence in order for \eqref{cond_griff} to hold we need that
\begin{align*}
	\sum_{j=0}^{n-1}\sum_{r=j+1}^n\binom{r}{j}\E\left[(1-W)^jW^{r-j-1}\frac{Y^2}{Z+Y^2}\right]a_{n,r}x^j=\sum_{j=0}^{n-1}na_{n-1,j}\E\left[(1-Y-ZV)^j\frac{Z}{Z+Y^2}\right]x^j,
\end{align*}
or equivalently that
\begin{align}\label{recursion}
	a_{n-1,j}=\sum_{r=j+1}^n\binom{r}{j}\frac{\E\left[(1-W)^jW^{r-j-1}\displaystyle\frac{Y^2}{Z+Y^2}\right]}{n\E\left[(1-Y-ZV)^j\displaystyle\frac{Z}{Z+Y^2}\right]}a_{n,r}.
\end{align}
Following the discussion in \cite{Griff}, we can determine the coefficients $\{a_{n,j}\}_{j=0}^{n-1}$ of the polynomial $h_{n}$, from the coefficients $\{a_{n-1,j}\}_{j=0}^{n}$ of the polynomial $h_{n-1}$. Indeed, we can take $a_{n,n}$ as in \eqref{first} and then, using \eqref{recursion}, recursively obtain $a_{n,j}$ for $j=n-1,\dots,0$. We notice that the coefficient $a_{n,0}$ can be chosen arbitrarly, and will be specified later in the construction of the probability of fixation $p$.

Now let us return to the probability of fixation $p$, where for the first derivative $p'(x)$ we make the ansatz 
\begin{align}\label{der}
	p'(x)= A\sum_{n=1}^{\infty}2^nc_nh_{n-1}(x),
\end{align}
with $p(1)=1$ and $p(0)=0$. Using  \eqref{der} in \eqref{eq_prob}, together with \eqref{griff_0} and \eqref{cond_griff} gives
\begin{align*}
	&\E\left[\frac{p'(x(1-W)+W)-p'(x(1-W))}{W}\frac{Y^2}{Z+Y^2}-2 p'(x(1-Y-ZV))\frac{Z}{Z+Y^2}\right]\notag\\
	&=A\sum_{n=1}^{\infty}2^nc_n(n-1)\E\left[h_{n-2}\left(x(1-Y-ZV)\right)\frac{Z}{Z+Y^2}\right]-2A\sum_{n=1}^{\infty}2^nc_n\E\left[h_{n-1}\left(x(1-Y-ZV)\right)\frac{Z}{Z+Y^2}\right].
\end{align*}
By chosing $
c_n=\frac{1}{(n-1)!},
$ for $n\in\mathbb{N}$, we obtain
\begin{align*}
	&\E\left[\frac{p'(x(1-W)+W)-p'(x(1-W))}{W}\frac{Y^2}{Z+Y^2}-2 p'(x(1-Y-ZV))\frac{Z}{Z+Y^2}\right]\notag\\
	&=A\sum_{n=2}^{\infty}\frac{2^n}{(n-2)!}\E\left[h_{n-2}\left(x(1-Y-ZV)\right)\frac{Z}{Z+Y^2}\right]-A\sum_{n=1}^{\infty}\frac{2^{n+1}}{(n-1)!}\E\left[h_{n-1}\left(x(1-Y-ZV)\right)\frac{Z}{Z+Y^2}\right]=0.
\end{align*}
Hence, $p$ is a solution to \eqref{eq_prob}. Now, by integrating \eqref{der} we obtain that
\begin{align*}
	p(x)=A\sum_{n=1}^{\infty}\int_0^x \frac{2^n}{(n-1)!}h_{n-1}(u)du.
\end{align*}
So if we choose $\{h_n(0)\}_{n\geq1}$ such that
$
\int_0^1 nh_{n-1}(u) du=1,
$
then by the fact that $p(1)=1$ and $p(0)=0$ we obtain that
\begin{align*}
	1=p(1)-p(0)=A\sum_{n=1}^{\infty}\frac{2^n}{n!}\int_0^1nh_{n-1}(u)du=A(e^{2}-1),
\end{align*}
and hence $A=(e^{2}-1)^{-1}$.

So putting the pieces together we have that
\[
p(x)=(e^{2}-1)^{-1}\sum_{n=1}^{\infty}\frac{2^n}{n!}H_n(x),
\]
where $H_n(x):=\int_0^x nh_{n-1}(u)du$, and $\{h_n\}_{n\geq 1}$ satisfies \eqref{cond_griff}.
\bigskip

The previous discussion leads to the following main result of this section. 
\begin{prop}
	The fixation probability of type $\ominus$ individuals is given by
	\begin{equation*}
		p(x)=(e^{2}-1)^{-1}\sum_{n=1}^{\infty}\frac{2^n}{n!}H_n(x),\qquad x\in[0,1],
	\end{equation*}
	where the polynomials $\{H_n\}_{n=0}^{\infty}$ are given by
	\[
	H_n(x)=\int_0^xnh_{n-1}(u)du,\qquad x\in[0,1],
	\]
	with $\{h_n\}_{n=0}^{\infty}$ given in \eqref{poly}, \eqref{first}, and \eqref{recursion} and the constants $\{h_n(0)\}_{n=0}^{\infty}$ chosen so that
	$
	\int_0^1nh_{n-1}=1.
	$
\end{prop}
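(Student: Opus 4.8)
The plan is to treat this proposition as the consolidation of the preceding construction: the main tasks are to identify $p$ as the unique bounded $\mathcal{B}$-harmonic function on $(0,1)$ with the prescribed boundary values, to check that the proposed series does produce such a function, and to see that the normalisation $A=(e^2-1)^{-1}$ is forced.

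First I would record that the fixation probability $x\mapsto p(x)$ satisfies $\mathcal{B}p\equiv 0$ on $(0,1)$ with $p(0)=0$ and $p(1)=1$, the standard characterisation of an absorption probability for a Feller process whose only traps are the endpoints $\{0,1\}$ (both are absorbing for the SDE \eqref{sde_n_2}). Inserting the Griffiths representation of $\mathcal{B}$ derived above turns $\mathcal{B}p=0$ into the integro-differential identity \eqref{eq_prob}, and rewriting the second-order term as in \eqref{griff_0} re-expresses everything through $p'$ evaluated at the random points $x(1-W)+W$ and $x(1-W)$, with $W=UY$.

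Next I would construct the polynomials $h_n$: set the diagonal coefficient $a_{n,n}$ by \eqref{first} and generate the remaining $a_{n,j}$ downward via \eqref{recursion}. The algebraic heart here is the identity \eqref{cond_griff}, which I would verify by expanding $h_n(x(1-W)+W)-h_n(x(1-W))$ with the binomial theorem, collecting powers of $x$, and matching coefficients against $n\,\E[h_{n-1}(x(1-Y-ZV))\,Z/(Z+Y^2)]$ — precisely the coefficient comparison that yields \eqref{recursion}. With \eqref{cond_griff} available, I would substitute the ansatz \eqref{der} for $p'$ into \eqref{eq_prob}: the identity converts the first term into a series in $h_{n-2}$, and the choice $c_n=1/(n-1)!$ makes it telescope against the second term, so that the bracket vanishes and $p$ solves \eqref{eq_prob}.

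Finally I would pin down the free constant. Integrating \eqref{der} gives $p(x)=A\sum_{n\geq1}\frac{2^n}{(n-1)!}\int_0^x h_{n-1}(u)\,du=A\sum_{n\geq1}\frac{2^n}{n!}H_n(x)$, and imposing $\int_0^1 n h_{n-1}=1$ on the still-free constants $h_n(0)$ makes each $H_n(1)=1$; since $H_n(0)=0$, the boundary conditions give $p(1)-p(0)=A\sum_{n\geq1}2^n/n!=A(e^2-1)=1$, hence $A=(e^2-1)^{-1}$. The step I expect to be most delicate is not this formal cancellation but the analysis surrounding it: one must justify that the series defining $p'$ converges on $[0,1]$ and may be integrated and differentiated term by term (so that the manipulations with \eqref{der} are legitimate and $p$ genuinely lies in the domain of $\mathcal{B}$), and, in order to conclude that the constructed $p$ \emph{is} the fixation probability and not merely some solution of \eqref{eq_prob}, one must invoke uniqueness of the bounded $\mathcal{B}$-harmonic function with $p(0)=0$, $p(1)=1$. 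This uniqueness follows from a standard martingale argument once one knows that $Y$ is absorbed at $\{0,1\}$ and does not explode, using that $p(Y_t)$ is then a bounded martingale converging to $1_{\{Y_\infty=1\}}$.
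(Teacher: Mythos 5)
Your proposal follows essentially the same route as the paper: harmonicity of $p$ for the Griffiths form of $\mathcal{B}$, construction of the polynomials $h_n$ via \eqref{first} and \eqref{recursion} so that \eqref{cond_griff} holds, the ansatz \eqref{der} with $c_n=1/(n-1)!$ to make the series telescope, and the normalisation forcing $A=(e^{2}-1)^{-1}$. The analytic caveats you flag (term-by-term convergence and differentiation of the series, and uniqueness of the bounded $\mathcal{B}$-harmonic function via the martingale/absorption argument, so that the constructed solution really is the fixation probability) are left implicit in the paper's semi-explicit computation, so your write-up is, if anything, more careful on those points.
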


\begin{appendix}

\section{Proof of Proposition \ref{sde_exis}}\label{appendix_1}
%\begin{proof}
	Let us denote for $(x,y,z,u)\in[0,1]\times\Delta\times[0,1]$
	\[
	g(x,y,z,u):=y(1-x)1_{\{u<x\}}-(y+z)x1_{\{u\geq x\}}.
	\]
	For fixed $(y,z,u)\in\Delta\times[0,1]$ we have
	\begin{equation}\label{eu_1}
		x+g(x,y,z,u)= \begin{cases} y+x(1-y) &\mbox{if } u<x, \\
			x(1-(y+z)) & \mbox{if }u\geq x. \end{cases}
	\end{equation}
	First, we note that for fixed $(y,z,u)\in\Delta\times[0,1]$
	\begin{align*}
		0\leq x+g(x,y,z,u)=x+y(1-x)1_{\{u<x\}}-(y+z)x1_{\{u\geq x\}}\leq 1.
	\end{align*}
	Hence, by a modification of Proposition 2.1 in \cite{FL} (see also Corollary 6.2 in \cite{LP}) we obtain that $\mathbb{P}\left(Y_t\in[0,1] \ \text{for all $t\geq0$}\right)=1$.
	
	On the other hand, \eqref{eu_1} implies that for fixed $(y,z,u)\in\Delta\times[0,1]$ the mapping $x\mapsto x+g(x,y,z,u)$ is non-decreasing for $x\in[0,1]$.
	
	Denote for $x\in[0,1]$
	\begin{equation*}
		b(x):=\left(\int_{\Delta}z\Lambda(dy,dz)\right)x(1-x).
	\end{equation*}
	Then, for $x_1,x_2\in[0,1]$
	\begin{align}\label{eu_2}
		|b(x_1)-b(x_2)|=|x_2(1-x_2)-x_1(1-x_1)|\int_{\Delta}z\Lambda(dy,dz)\leq 2\int_{\Delta}z\Lambda(dy,dz)|x_2-x_1|.
	\end{align}
	Similarly, for $x_1,x_2\in[0,1]$
	\begin{align}\label{lips_g}
		|g(x_1,y,z,u)-g(x_2,y,z,u)|&=y(x_2-x_1)1_{\{u\leq x_1\wedge x_2\}}-(y(1-x_2)+(y+z)x_1)1_{\{x_1\leq u\leq x_2\}}\notag\\
		&+(y(1-x_1)+(y+z)x_2)1_{\{x_2\leq u\leq x_1\}}-(y+z)(x_1-x_2)1_{\{x_1\vee x_2\leq u\}}.
	\end{align}
	Using \eqref{lips_g} we obtain for $x_1,x_2\in[0,1]$
	\begin{align}\label{eu_3}
		\int_{\Delta}\int_0^1&|g(x_1,y,z,u)-g(x_2,y,z,u)|^2du\Lambda(dy,dz)\notag\\
		&\leq\int_{\Delta}\int_0^1\Big[y^2|x_2-x_1|^21_{\{u\leq x_1\wedge x_2\}}+(y(1-x_2)+(y+z)x_1)^21_{\{x_1\leq u\leq x_2\}}\notag\\
		&+(y(1-x_1)+(y+z)x_2)^21_{\{x_2\leq u\leq x_1\}}+(y+z)^2|x_1-x_2|^21_{\{x_1\vee x_2\leq u\}}\Big]du \Lambda(dy,dz)\notag\\
		&\leq 6|x_2-x_1|\int_{\Delta}(y^2+z)\Lambda(dy,dz).
	\end{align}
	Finally we note that for $x\in[0,1]$
	\begin{align}\label{eu_4}
		|b(x)|^2=\left(\int_{\Delta}z\Lambda(dy,dz)\right)^2x^2(1-x)^2\leq \left(\int_{\Delta}z\Lambda(dy,dz)\right)^2x^2,
	\end{align}
	and
	\begin{align}\label{eu_5}
		\int_{\Delta}\int_0^1|g(x,y,z,u)|^2du\Lambda(dy,dz)&=\int_{\Delta}\int_0^1\left[y^2(1-x)^21_{\{u< x\}}+(y+z)^2x^21_{\{u\geq x\}}\right]du\Lambda(dy,dz)\notag\\
		&\leq \int_{\Delta}\left[y^2(1-x)^2x+x^2(y+z)^2(1-x)\right]\Lambda(dy,dz)\notag\\
		&\leq (1+x^2)\int_{\Delta}(y^2+z)\Lambda(dy,dz).
	\end{align}
	Therefore, there exists $K>0$ such that for $x\in[0,1]$
	\begin{align}\label{eu_6}
		|b(x)|^2+\int_{\Delta}\int_0^1|g(x,y,z,u)|^2du\Lambda(dy,dz)\leq K(1+|x|^2).
	\end{align}
	Hence using \eqref{eu_2}, \eqref{eu_3} and \eqref{eu_5} together with the fact that the mapping $x\mapsto x+g(x,y,z,u)$ is non-decreasing for $x\in[0,1]$, we obtain by a slight modification of Theorem 5.1 in \cite{LP} that there exists a unique strong solution to \eqref{sde_n_2}.
\section{Proof of Proposition \ref{asy_a}}\label{appendix_2}
	We recall that the infinitesimal generator $\mathcal{B}^N$ of the process $Y^N$ is given for any $f\in\mathcal{C}^2([0,1])$ by
	\begin{align*}
		\mathcal{B}^Nf(x)=\int_{\Delta^N}\Bigg\{\frac{\lfloor xN\rfloor}{N}\E\Bigg[f&\left(\frac{\lfloor xN\rfloor}{N}+\frac{1}{N}B_N\right)-f\left(\frac{\lfloor xN\rfloor}{N}\right)\Bigg]\notag\\
		&+\left(1-\frac{\lfloor xN\rfloor}{N}\right)\E\Bigg[f\left(\frac{\lfloor xN\rfloor}{N}-\frac{1}{N}\tilde{B}_N\right)-f\left(\frac{\lfloor xN\rfloor}{N}\right)\Bigg]\Bigg\}\Lambda(dy,dz),
	\end{align*}
	where $B_N:=\text{Binom}\left(N\left(1-\frac{\lfloor xN\rfloor}{N}\right);y\right)$ and $\tilde{B}_N:=\text{Binom}\left(\lfloor xN\rfloor;y+z\right)$.
	
	Now, for any $f\in\mathcal{C}^2([0,1])$ and $x\in[0,1]$ we have by Taylor's Theorem that
	\begin{align}\label{scalin_moran_0_a}
		f\left(\frac{\lfloor xN\rfloor}{N}+\frac{1}{N}B_N\right)&-f\left(\frac{\lfloor xN\rfloor}{N}\right)-(f(x+(1-x)y)-f(x))\notag\\
		&=R\left(\frac{\lfloor xN\rfloor}{N},\frac{\lfloor xN\rfloor}{N}+\frac{1}{N}B_N\right)-R(x,x+(1-x)y).
	\end{align}
	where $R(u,v):=\int_u^vf'(t)dt$.
	
	Now, we note that 
	\begin{align}\label{scalin_moran_1_a}
		R\Bigg(\frac{\lfloor xN\rfloor}{N},\frac{\lfloor xN\rfloor}{N}+\frac{1}{N}B_N\Bigg)-R(x,x+(1-x)y)
		%&=\int_{\frac{\lfloor xN\rfloor}{N}}^{\frac{\lfloor xN\rfloor}{N}+\frac{1}{N}B_N}f'(t)dt-\int_{x}^{x+(1-x)y}f'(t)dt\notag\\
		&\leq\int_{\frac{\lfloor xN\rfloor}{N}}^{x}f'(t)dt+\int_{x+(1-x)y}^{\frac{\lfloor xN\rfloor}{N}+\frac{1}{N}B_N}f'(t)dt.
	\end{align}
	Therefore, using \eqref{scalin_moran_1_a} 
	\begin{align}\label{scalin_moran_2_a}
		\Bigg|R\Bigg(\frac{\lfloor xN\rfloor}{N},\frac{\lfloor xN\rfloor}{N}+\frac{1}{N}B_N\Bigg)&-R(x,x+(1-x)y)\Bigg|\notag\\
		&\leq \|f'\|_{[0,1]}\left|x-\frac{\lfloor xN\rfloor}{N}\right|+\|f'\|_{[0,1]}\left|\frac{\lfloor xN\rfloor}{N}+\frac{1}{N}B_N-x-(1-x)y\right|\notag\\
		%&=\|f'\|_{[0,1]}\frac{1}{N}+\|f'\|_{[0,1]}\left|(1-y)\left(\frac{\lfloor xN\rfloor}{N}-x\right)+\frac{1}{N}B_N-\left(1-\frac{\lfloor xN\rfloor}{N}\right)y\right|\notag\\
		%&+\|f'\|_{\infty}(1-x)y\left|\frac{\lfloor xN\rfloor}{N}+\frac{1}{N}B_N-x-(1-x)y\right|\notag\\
		&\leq\|f'\|_{[0,1]}\frac{1}{N}+\|f'\|_{[0,1]}\left[\frac{1}{N}(1-y)+\left|\frac{1}{N}B_N-\left(1-\frac{\lfloor xN\rfloor}{N}\right)y\right|\right],
	\end{align}
	where $\|g\|_{[0,1]}:=\sup_{x\in[0,1]}|g(x)|$ for any $g\in\mathcal{C}^2([0,1])$. 
	
	On the other hand, by the Cauchy-Schwarz inequality%we note that 
	\begin{align*}
		\E\left[\left|\frac{1}{N}B_N-\left(1-\frac{\lfloor xN\rfloor}{N}\right)y\right|\right] \leq\left(\E\left[\left|\frac{1}{N}B_N-\left(1-\frac{\lfloor xN\rfloor}{N}\right)y\right|^2\right]\right)^{1/2}%=\frac{1}{N^2}\E\left[\left|B_N-N\left(1-\frac{\lfloor xN\rfloor}{N}\right)y\right|^2\right]
		=\left(\frac{1}{N}\left(1-\frac{\lfloor xN\rfloor}{N}\right)y(1-y)\right)^{1/2},
	\end{align*}
	%and by the Cauchy-Schwarz inequality we have
	%\begin{align*}
	%	\E\left[\left|\frac{1}{N}B_N-\left(1-\frac{\lfloor xN\rfloor}{N}\right)y\right|\right]=\left[\frac{1}{N}\left(1-\frac{\lfloor xN\rfloor}{N}\right)y(1-y)\right]^{1/2}.
	%\end{align*}
	Hence, by taking expectations on \eqref{scalin_moran_2_a}, we can find a constant $C_f>0$ only dependent on $f$ such that
	\begin{align}\label{scalin_moran_3_a}
		\E\Bigg[\Bigg|R\Bigg(\frac{\lfloor xN\rfloor}{N},\frac{\lfloor xN\rfloor}{N}+\frac{1}{N}B_N\Bigg)-R(x,x+(1-x)y)\Bigg|\Bigg]=C_f\left(\frac{1}{N}+\frac{1}{N^{1/2}}y^{1/2}\right).
	\end{align}
	Hence, by \eqref{scalin_moran_0_a} together with \eqref{scalin_moran_3_a} there exists a constant $C_f>0$ only dependent on $f$ such that
	\begin{align}\label{freq_ASG_0}
		\Bigg|f\left(\frac{\lfloor xN\rfloor}{N}+\frac{1}{N}B_N\right)&-f\left(\frac{\lfloor xN\rfloor}{N}\right)-(f(x+(1-x)y)-f(x))\Bigg|\leq C_f\left(\frac{1}{N}+\frac{1}{N^{1/2}}y^{1/2}\right).
	\end{align}Very similarly we can deal with the other term in the generator involving $\tilde{B}_N$, and after the same kind of calculations as before we arrive at
	\begin{align}\label{freq_ASG_1}
		\Bigg|f\left(\frac{\lfloor xN\rfloor}{N}-\frac{1}{N}\tilde{B}_N\right)
		-f\left(\frac{\lfloor xN\rfloor}{N}\right)-(f(x-x(y+z))-f(x))\Bigg|\leq \tilde{C}_f\left(\frac{1}{N}+\frac{1}{N^{1/2}}(y+z)^{1/2}\right).
	\end{align}
	Using \eqref{freq_ASG_0} together with \eqref{freq_ASG_1} we obtain, for $x\in[0,1]$,
	\begin{align}\label{freq_ASG_2}
		%\lim_{N\to\infty}
		\sup_{x\in[0,1]}\Bigg|\int_{\Delta^N}&\Bigg\{x\E\Bigg[f\left(\frac{\lfloor xN\rfloor}{N}+\frac{1}{N}B_N\right)-f\left(\frac{\lfloor xN\rfloor}{N}\right)\Bigg]\notag \\
		&\hspace{1.5cm}+(1-x)\E\Bigg[f\left(\frac{\lfloor xN\rfloor}{N}-\frac{1}{N}\tilde{B}_N\right)-f\left(\frac{\lfloor xN\rfloor}{N}\right)\Bigg]\Bigg\}\Lambda(dy,dz)\notag\\
		&-\int_{\Delta}\Bigg\{xf(x+y(1-x))+(1-x)f(x-(y+z)x)-f(x)\Bigg\}\Lambda(dy,dz)\Bigg|\notag\\
		&\leq 2(C_f+\tilde{C}_f)\frac{1}{N^{1/2}}\int_{\Delta^N}\Lambda(dy,dz)\notag\\&+\sup_{x\in[0,1]}\int_{\Delta\backslash\Delta^N}\left|xf(x+y(1-x))+(1-x)f(x-(y+z)x)-f(x)\right|\Lambda(dy,dz).
		%&\leq\lim_{N\to\infty} K\left(\frac{1}{N^{1/2-\alpha}}\int_{\Delta}(y^2+z)\Lambda(dy,dz)-\int_{\Delta\backslash\Delta^N}(y^2+z)\Lambda(dy,dz)\right)=0.
	\end{align}
	Expanding the integrand in the right-hand side of \eqref{freq_ASG_2} gives
	\begin{align}\label{freq_ASG_4}
		|xf(x+y(1-x))&+(1-x)f(x-(y+z)x)-f(x)|=\Bigg|x\int_x^{x+y(1-x)}\frac{f''(t)}{2}(x+y(1-x)-t)dt\notag\\&+(1-x)\int_x^{x-(y+z)x}\frac{f''(t)}{2}(x-(y+z)x-t)dt-x(1-x)zf'(x)\Bigg|\notag\\
		&\leq \frac{\|f''\|_{[0,1]}}{4}\left(x(1-x)^2y^2+(1-x)(y+z)^2x^2\right)+\|f'\|_{[0,1]}x(1-x)z\leq K_f(y^2+z),
	\end{align}
	where $K_f$ is positive constant only dependent on $f$. Hence, using \eqref{freq_ASG_4} in \eqref{freq_ASG_2} 
	\begin{align}\label{freq_ASG_5}
		\lim_{n\to\infty}\sup_{x\in[0,1]}\Bigg|\int_{\Delta^N}\Bigg\{x\E\Bigg[f&\left(\frac{\lfloor xN\rfloor}{N}+\frac{1}{N}B_N\right)-f\left(\frac{\lfloor xN\rfloor}{N}\right)\Bigg]\notag\\&+(1-x)\E\Bigg[f\left(\frac{\lfloor xN\rfloor}{N}-\frac{1}{N}\tilde{B}_N\right)-f\left(\frac{\lfloor xN\rfloor}{N}\right)\Bigg]\Bigg\}\Lambda(dy,dz)\notag\\
		&-\int_{\Delta}\Bigg\{xf(x+y(1-x))+(1-x)f(x-(y+z)x)-f(x)\Bigg\}\Lambda(dy,dz)\Bigg|\notag\\
		&\leq \lim_{n\to\infty}\left[ (C_f+\tilde{C}_f)\frac{1}{N^{1/2-\alpha}}\int_{\Delta}(y^2+z)\Lambda(dy,dz)+K_f\int_{\Delta\backslash\Delta^N}(y^2+z)\Lambda(dy,dz)\right]\notag \\
		&=0.
	\end{align} 
	Finally, we note that
	\begin{align}\label{freq_ASG_3}
		\lim_{N\to\infty}&\sup_{x\in[0,1]}\Bigg|\int_{\Delta^N}\Bigg\{\frac{\lfloor xN\rfloor}{N}\E\Bigg[f\left(\frac{\lfloor xN\rfloor}{N}+\frac{1}{N}B_N\right)-f\left(\frac{\lfloor xN\rfloor}{N}\right)\Bigg]\notag\\
		&+\left(1-\frac{\lfloor xN\rfloor}{N}\right)\E\Bigg[f\left(\frac{\lfloor xN\rfloor}{N}-\frac{1}{N}\tilde{B}_N\right)-f\left(\frac{\lfloor xN\rfloor}{N}\right)\Bigg]\Bigg\}\Lambda(dy,dz)\notag\\
		&-\int_{\Delta^N}\Bigg\{x\E\Bigg[f\left(\frac{\lfloor xN\rfloor}{N}+\frac{1}{N}B_N\right)-f\left(\frac{\lfloor xN\rfloor}{N}\right)\Bigg]\notag \\
		&\hspace{1.5cm}+(1-x)\E\Bigg[f\left(\frac{\lfloor xN\rfloor}{N}-\frac{1}{N}\tilde{B}_N\right)-f\left(\frac{\lfloor xN\rfloor}{N}\right)\Bigg]\Bigg\}\Lambda(dy,dz)\Bigg|\notag\\
		&\leq\lim_{N\to\infty} 2\|f\|_{[0,1]}\frac{1}{N^{1-\alpha}}\int_{\Delta}(y^2+z)\Lambda(dy,dz)=0.
	\end{align}
	Following the steps of Proposition 3 in \cite{CGP} we have that the infinitesimal generator $\mathcal{B}$ of the $\Lambda$-asymmetric frequency process $Y$ is given for any $f\in\mathcal{C}^2([0,1])$ and $x\in[0,1]$ by
	\begin{align}\label{inf_1}
		\mathcal{B}f(x)=\int_{\Delta}\left[xf(x+y(1-x))+(1-x)f(x-(y+z)x)-f(x)\right]\Lambda(dy,dz), \qquad x\in[0,1].
	\end{align}
	Hence, by \eqref{freq_ASG_5} together with \eqref{freq_ASG_3} we obtain that $\mathcal{B}^Nf\to \mathcal{B}f$ as $N\to\infty$ uniformly in $[0,1]$. Therefore, Theorem 17.25 in \cite{Ka} implies that $Y^N\to Y$ as $N\to\infty$ weakly in the space $\mathbb{D}([0,T],[0,1])$.

\end{appendix}

\paragraph*{Acknowledgements} The authors thank two anonymous referees for extremely helpful and constructive comments, that immensely improved the presentation.   AGC was supported by the grant PAPIIT UNAM IN101722 ``Nuevas aplicaciones de la dualidad de momentos y de la construccion Lookdown'', and acknowledge support from the Hausdorff Research Institute for Mathematics in Bonn where he made a 3 month research visit in the summer of 2022. The authors would like to thank Fernanda Lopez, who wrote her Master thesis at UNAM on a simplified version of the model discussed in this manuscript.

	%\section*{References}

\end{document}